\newtheorem{thm}{Theorem}
\newtheorem{cor}[thm]{Corollary}
\newtheorem{lemma}[thm]{Lemma}
\newtheorem*{theorem*}{Theorem}
\DeclareMathOperator{\F}{\mathbb{F}}
\begin{document}
\baselineskip=16.3pt
\parskip=14pt

\begin{center}
\section*{On Galois groups of linearized polynomials related to the special linear group of prime degree}

{\large 
Rod Gow and Gary McGuire
 \\ { \ }\\
School of Mathematics and Statistics\\
University College Dublin\\
Ireland}
\end{center}

 \subsection*{Abstract}
 
Let $F$ be any field of prime characteristic $p$ and let $q$ be a power of $p$. 
We assume that $F$ contains the finite field
of order $q$. A $q$-polynomial $L$ over $F$ is an element of the polynomial ring $F[x]$ with the property that
all exponents are a power of $q$. 
We assume that the coefficient of the $x$ term of $L$ is nonzero. We investigate the Galois group $G$
of $L$ over $F$, under the assumption that $L(x)/x$ is irreducible in $F[x]$. It is well known that if $L$ has $q$-degree $n$, its roots are
an $n$-dimensional vector space over the field of order $q$ and $G$ acts linearly on this space.

Our main theorem is the following. We consider a monic $q$-polynomial $L$ whose $q$-degree is a prime, $r$, say, with $L(x)/x$ irreducible over $F$. 
Assuming that the coefficient of the $x$ term of $L$ is $(-1)^r$, 
we show that the Galois group of $L$ over $F$
must be the special linear group $SL(r,q)$ of degree $r$ over the field of order $q$ when $q>2$.
We also prove a projective version for the projective special linear group $PSL(r,q)$, and we present
the analysis when $q=2$.
  
  \vskip1in
  
  {\bf Keywords} Galois group, linearized polynomial, special linear group

  \newpage
  
\section{Introduction}
  
Let $p$ be a prime number, and let $q$ be a positive power of $p$.
Let $\F_q$ denote the finite field with $q$ elements.
Let $F$ be a field of characteristic $p$,
and assume that $F$ contains $\F_q$.
A $q$-linearized polynomial over $F$  is a polynomial of the form 
\begin{equation}\label{lin1}
L=a_0x+a_1x^q+a_2x^{q^2}+\cdots+a_n x^{q^n} \in F[x].
\end{equation}
If $a_n\not=0$ we say that $n$ is the $q$-degree of $f$.
We will usually say $q$-polynomial instead of $q$-linearized polynomial.

The set of roots of a $q$-polynomial $L$ forms an $\F_q$-vector space,
which is contained in a splitting field of $L$. We make this statement more precise in the following lemma.

\begin{lemma} \label{vector_space}
Let $F$ be a field of prime characteristic $p$ that contains $\F_q$. Let $L$ be a $q$-polynomial of $q$-degree $n$ in $F[x]$, with
\[
L=a_0x+a_1x^q+a_2x^{q^2}+\cdots+a_n x^{q^n}.
\]
Let $E$ be a splitting field for $L$ over $F$ and let $V$ be the set of roots of $L$ in $E$. Let $G$ be the Galois group of $E$
over $F$. Suppose that $a_0\neq 0$. Then $V$ is an $\F_q$-vector space of dimension $n$ and $G$ is naturally a subgroup
of $GL(n,q)$.
\end{lemma}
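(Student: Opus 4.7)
The plan is to verify the three assertions in order: that $V$ carries a natural $\F_q$-vector space structure, that its dimension equals $n$, and that $G$ embeds into $GL(V)$.

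First I would observe that the map $L \colon E \to E$ is $\F_q$-linear. Additivity follows since the Frobenius $x \mapsto x^q$ is additive in characteristic $p$, hence so is each $x \mapsto x^{q^i}$, and therefore so is $L$ as an $F$-linear combination of these. Scalar compatibility follows because for any $c \in \F_q$ we have $c^{q^i} = c$ for all $i$, so $L(cx) = \sum a_i c^{q^i} x^{q^i} = c\, L(x)$. Thus $V = \ker L$ is an $\F_q$-subspace of $E$.

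Next I would show $\dim_{\F_q} V = n$ by proving $L$ is separable. The formal derivative is
\[
L'(x) = a_0 + \sum_{i\ge 1} a_i \cdot q^i x^{q^i - 1} = a_0,
\]
since $q^i \equiv 0$ in $F$ for $i \geq 1$. Because $a_0 \neq 0$ by hypothesis, $\gcd(L, L') = 1$, so $L$ has $\deg L = q^n$ distinct roots in $E$. An $\F_q$-vector space of cardinality $q^n$ has dimension $n$.

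For the Galois action, any $\sigma \in G$ fixes $F$ and hence $\F_q$, so $\sigma$ acts $\F_q$-linearly on $E$. Since $L$ has coefficients in $F$, $\sigma(L(v)) = L(\sigma(v))$, so $\sigma(V) \subseteq V$; bijectivity of $\sigma$ on $E$ forces $\sigma|_V \in GL(V)$. This gives a homomorphism $G \to GL(V) \cong GL(n,q)$. Finally, the splitting field satisfies $E = F(V)$, so any $\sigma$ acting trivially on $V$ acts trivially on $E$, showing the map is injective. The only step that requires a small amount of care is the separability computation in the second paragraph, but it reduces to the single observation that all derivative terms after $a_0$ vanish in characteristic $p$.
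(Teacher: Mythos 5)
Your proposal is correct and follows essentially the same route as the paper: $\F_q$-linearity of $L$ via the Frobenius and $c^{q}=c$, separability from $L'=a_0\neq 0$ giving $|V|=q^n$, and the $\F_q$-linear Galois action giving $G\le GL(n,q)$. Your added remark that $E=F(V)$ makes the map $G\to GL(V)$ injective is a welcome detail the paper leaves implicit, but it does not change the argument.
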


\begin{proof}
The derivative $L'(x)=a_0$ and is thus a nonzero constant under the hypothesis above. It follows that $L$ has no repeated
roots and thus $|V|=q^n$. Let $\alpha$ and $\beta$ be elements of $V$. Then $L(\alpha)=L(\beta)=0$. Since
\[
(\alpha+\beta)^{q^i}=\alpha^{q^i}+\beta^{q^i}
\]
for all $i\geq 0$, it is clear that $L(\alpha+\beta)=0$, and hence $\alpha+\beta\in V$.

Let $\lambda$ be an element of $\F_q$. Since $\lambda^q=\lambda$, we have $(\lambda\alpha)^{q^i}=\lambda \alpha^{q^i}$. It
follows that $L(\lambda\alpha)=\lambda L(\alpha)=0$ and hence $\lambda\alpha\in V$. These arguments show that $V$ is a vector space
over $\F_q$. Furthermore, since $|V|=q^n$, $V$ has dimension $n$ over $\F_q$.

Let $\sigma$ be an element of $G$. By definition of Galois group action, $G$ permutes the roots of $L$ and hence maps $V$ into itself.
In addition, since $G$ is a group of field automorphisms of $E$, we have
\[
\sigma(\alpha+\beta)=\sigma(\alpha)+\sigma(\beta)
\]
for all $\alpha$ and $\beta$ in $V$. 

Now let $\lambda$ be an element of $\F_q$. Since $\F_q$ is assumed to be a subfield of $F$ and $G$ fixes $F$ elementwise, we have
$\sigma(\lambda)=\lambda$. Then, again by definition of Galois group action,
\[
\sigma(\lambda\alpha)=\sigma(\lambda)\sigma(\alpha)=\lambda\sigma(\alpha).
\]
This shows that the action of $G$ on $V$ is $\F_q$-linear and thus $G$ may be considered to be a subgroup of $GL(n,q)$.
\end{proof}

We call $V$ the ($\F_q$-vector) space of roots of $L$.

The next result is a trivial consequence of elementary Galois theory but it plays an important
role in the paper.

\begin{lemma} \label{transitive_action}
Let $F$ be a field of prime characteristic $p$ that contains $\F_q$. Let $L$ be a $q$-polynomial in $F[x]$, such
that $L(x)/x$ is irreducible over $F$. 
Let $E$ be a splitting field for $L$ over $F$ and let $V$ be the space of roots of $L$ in $E$. Let $G$ be the Galois group of $E$
over $F$. Then $G$ acts transitively on the nonzero elements of $V$.
\end{lemma}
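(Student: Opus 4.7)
The plan is to recognize this as an immediate consequence of the standard principle that a Galois group acts transitively on the roots of any irreducible polynomial over the base field. First I would observe that because $L$ is a $q$-polynomial, it has no constant term, so $0$ is always a root of $L$ and hence $0 \in V$; therefore in $F[x]$ we have the factorization $L(x) = x \cdot (L(x)/x)$, and the nonzero elements of $V$ are precisely the roots of $L(x)/x$ lying in $E$. Moreover, since the splitting field of $L$ over $F$ is obtained by adjoining the roots of $L$ to $F$ and $0 \in F$ already, $E$ is also the splitting field of $L(x)/x$ over $F$.

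With this setup in hand, the conclusion follows at once. Given any two nonzero roots $\alpha, \beta \in V$, the irreducibility of $L(x)/x$ over $F$ ensures that $\alpha$ and $\beta$ have the same minimal polynomial over $F$, so there is an $F$-isomorphism $F(\alpha) \to F(\beta)$ sending $\alpha \mapsto \beta$. Because $E$ is a normal extension of $F$ (being a splitting field of a separable polynomial, by Lemma \ref{vector_space}), this $F$-isomorphism extends to an automorphism $\sigma \in G$ with $\sigma(\alpha) = \beta$. Hence $G$ acts transitively on the nonzero elements of $V$.

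There is no serious obstacle here. The only non-mechanical step is noticing that writing $L(x) = x \cdot (L(x)/x)$ identifies the action of $G$ on the nonzero roots of $L$ with the action of $G$ on the roots of the irreducible factor $L(x)/x$, after which the result is a direct instance of the textbook transitivity theorem in Galois theory.
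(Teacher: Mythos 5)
Your proof is correct and follows essentially the same route as the paper: both reduce the statement to the standard fact that the Galois group of a splitting field acts transitively on the roots of an irreducible factor, the paper citing this fact directly while you unpack it via the minimal-polynomial and isomorphism-extension argument.
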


\begin{proof}
$G$ permutes the roots of $L(x)/x$, which are the nonzero elements of $V$. Since $L(x)/x$ is irreducible, the action of $G$
on the roots of $L(x)/x$ is transitive.
\end{proof}

It follows from these two lemmas that
our Galois groups are subgroups 
of $GL(n,q)$ that act transitively on the nonzero
vectors of the underlying $n$-dimensional space.
There are a number of papers in the literature on this topic, 
as we will discuss  in Sections 3 and 4.
We use these results to prove our main theorem, as stated in the abstract, in Section 4.
Before that, we present some elementary calculations in Section 2.
Our main theorem is false for $q=2$, and we present 
the corresponding theorem when $q=2$ in Section 5.
In Section 6 we present a  theorem for $PSL(n,q)$,
and in Section 7 we discuss the case $n=2$.

We remark that $SL(n,q)$ has been realized as a Galois group over $\F_q (t)$ by several authors,
however our main result (Theorem  \ref{Galois_group_is_SL(r,q)}) has a more general ground field.

\section{Calculations with the determinant}

\noindent Let $F$ be a field of characteristic $p$ and $L$ be a monic $q$-polynomial of $q$-degree $n$ in $F[x]$, with nonzero coefficient of $x$. Let $\alpha_1$, \dots, $\alpha_n$
be an $\F_q$-basis for the space of roots of $L$ in a splitting field $E$ of $L$ over $F$. We define
the $(n+1)\times (n+1)$ matrix $A$ by
\[
A:=
 \left[ \begin{array}{ccccc}
   \alpha_1&\alpha_1^q&\alpha_1^{q^2}&\cdots&\alpha_1^{q^{n}}\\
 \alpha_2&\alpha_2^q&\alpha_2^{q^2}&\cdots&\alpha_2^{q^{n}}\\
 \vdots&\vdots&\vdots&\ddots&\vdots\\
  \alpha_n&\alpha_n^q&\alpha_n^{q^2}&\cdots&\alpha_n^{q^{n}}\\
  x&x^q&x^{q^2}&\cdots&x^{q^{n}}\\
 \end{array} \right].
\]
It is clear that $\det A$ is a $q$-polynomial with coefficients in $E$,
and we claim that its roots are the same as those of $L$. 
For if we set $x=\alpha_i$, the determinant of the resulting matrix is zero. 
Thus the $\alpha_i$ are 
roots of $\det A$, and since $\det A$ is a $q$-polynomial,
each element of the space of roots of $L$ is a root of $\det A$. Thus $\det A$ has $q^n$ different roots in $L$. As $\det A$
has $q$-degree at most $n$, we deduce that $\det A$ has $q$-degree exactly $n$ and hence the coefficient of $x^{q^n}$ is nonzero.

Let $D$ denote the $n\times n$ matrix obtained by omitting the last row and last column of $A$.
 It is clear by expansion
of $\det A$ that $\det D$ is the coefficient of $x^{q^n}$ in $\det A$. 
Therefore $\det D$ is a nonzero element of $E$. 
Let $\delta=\det D$.

\begin{lemma}\label{inF}
Continuing the notation above, 
\begin{enumerate}
\item $\delta^{q-1}\in F$, and
\item $F(\delta)$ is a normal extension of $F$.
\end{enumerate}
\end{lemma}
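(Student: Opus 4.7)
The plan is to exploit the fact, established in Lemma \ref{vector_space}, that the Galois group $G$ of $E/F$ acts $\F_q$-linearly on the space $V$ of roots of $L$, and then to compute the action of a typical $\sigma \in G$ on $\delta$.

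First I would fix $\sigma \in G$ and write its action on the chosen basis as $\sigma(\alpha_i) = \sum_{j=1}^{n} c_{ij}\alpha_j$ with $C := (c_{ij}) \in GL(n,q)$. The critical observation is that because the entries $c_{ij}$ lie in $\F_q$, they are fixed by the Frobenius $t \mapsto t^{q^k}$, so
\[
\sigma(\alpha_i)^{q^k} = \sum_{j=1}^{n} c_{ij}\alpha_j^{q^k}
\]
for every $k\geq 0$. This says exactly that applying $\sigma$ entrywise to $D$ produces the matrix $C D$. Taking determinants gives
\[
\sigma(\delta) = \det(C)\,\delta.
\]

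Since $\det(C) \in \F_q^{*}$, it satisfies $\det(C)^{q-1} = 1$, whence $\sigma(\delta^{q-1}) = \delta^{q-1}$ for every $\sigma \in G$. Because $L$ is separable (its derivative is the nonzero constant $a_0$), $E/F$ is a Galois extension with Galois group $G$, and so the fixed field of $G$ is $F$; this gives part (1).

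For part (2), the same formula $\sigma(\delta) = \det(C)\,\delta$ shows that $\sigma(\delta) \in F\cdot\delta \subseteq F(\delta)$ for every $\sigma \in G$. Thus $F(\delta)$ is mapped into itself by every $F$-automorphism of $E$, so $F(\delta)$ is a normal (indeed Galois) subextension of $E/F$. The only real content is the linearity observation from Lemma \ref{vector_space} combined with the fact that $\F_q$ is fixed by Frobenius; there is no serious obstacle, and no case analysis is required.
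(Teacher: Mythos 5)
Your proof is correct, but it takes a genuinely different route from the paper's. You first establish the twist formula $\sigma(\delta)=\det(S_\sigma)\,\delta$ (by observing that applying $\sigma$ entrywise to $D$ gives $S_\sigma D$, since the matrix entries lie in $\F_q$ and are fixed by the $q$-power maps) --- this is exactly the paper's Lemma \ref{sigmalemma}, which the paper proves \emph{after} Lemma \ref{inF} --- and then you deduce (1) from the fact that $\delta^{q-1}$ is fixed by all of $G$ and the fixed field of $G$ in the Galois extension $E/F$ is $F$, and (2) from $G$-stability of $F(\delta)$ inside the Galois extension $E/F$. The paper instead argues by pure polynomial algebra: since $\det A$ is a $q$-polynomial with the same roots and $q$-degree as the monic $L$, one has $L=\delta^{-1}\det A$, and cofactor expansion shows the coefficient of $x$ in $\det A$ is $(-1)^n\delta^q$, so the coefficient of $x$ in $L$ equals $(-1)^n\delta^{q-1}$, which therefore lies in $F$; normality of $F(\delta)$ then follows because $F(\delta)$ is the splitting field over $F$ of $x^{q-1}-\delta^{q-1}$ (its roots are the $\omega\delta$ with $\omega\in\F_q^*\subseteq F$). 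The trade-off: the paper's computation yields the sharper identification of the coefficient of $x$ as $(-1)^n\delta^{q-1}$, which is what Corollary \ref{determinants_are_1} actually uses later (coefficient $(-1)^n$ forces $\delta^{q-1}=1$, hence $\delta\in\F_q\subseteq F$); your argument proves only membership $\delta^{q-1}\in F$, so it suffices for the lemma as stated but would not by itself support that later step. On the other hand, your approach subsumes Lemma \ref{sigmalemma} and packages both parts as clean Galois-theoretic consequences, at the cost of invoking the Galois correspondence where the paper's proof of (1) needs none.
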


\begin{proof}
Since $\det A$ and $L$ have the same $q$-degree and same roots,
and since $L$ is assumed to be monic, and $\delta$ is the leading coefficient 
of $\det A$, we have
\[
L=\frac{1}{\delta} \det A.
\]
Expansion of $\det A$ shows that the coefficient of $x$ in $\det A$ is $(-1)^n \delta^q$. 
It follows that the coefficient
of $x$ in $L$ is $(-1)^n \delta^{q-1}$. An immediate consequence is that $\delta^{q-1}\in F$.

It follows that $\delta$ is a root in $E$ of the polynomial 
$x^{q-1}-\delta^{q-1}$, which has coefficients in $F$. 
Since the roots of $x^{q-1}-\delta^{q-1}$ are the elements 
$\omega\delta$, where $\omega$ ranges over the nonzero elements of $\F_q$,
$E$ contains the splitting field $F(\delta)$ of $x^{q-1}-\delta^{q-1}$ over $F$. 
Thus $F(\delta)$ is a normal subfield of $E$.
\end{proof}

Let $G$ be the Galois group of $E$ over $F$.
Since $F(\delta)$ is a normal extension of $F$, it is mapped into itself by $G$.
The next lemma tells us how $G$ acts on $F(\delta)$.

\begin{lemma}\label{sigmalemma}
Continuing the notation above, 
 let $\sigma$ be an element of $G$.
Then
\begin{equation}\label{sd1}
\sigma(\delta)=(\det \sigma) \ \delta.
\end{equation}
\end{lemma}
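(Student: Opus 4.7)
The plan is to compute $\sigma(\delta)$ directly by applying $\sigma$ entrywise to the matrix $D$ and showing that this amounts to left-multiplication by the matrix of $\sigma$ acting on $V$.

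First I would write $M=(m_{ij})$ for the matrix of $\sigma \in G \subseteq GL(n,q)$ with respect to the $\F_q$-basis $\alpha_1,\dots,\alpha_n$, so that
\[
\sigma(\alpha_i)=\sum_{j=1}^n m_{ij}\alpha_j, \qquad m_{ij}\in\F_q.
\]
The decisive observation is that the entries $m_{ij}$ lie in $\F_q$, so they are fixed by the Frobenius $x\mapsto x^q$: $m_{ij}^{q^k}=m_{ij}$ for every $k\geq 0$. Combined with additivity of the Frobenius, this gives
\[
\sigma\bigl(\alpha_i^{q^{k}}\bigr)=\sigma(\alpha_i)^{q^{k}}=\Bigl(\sum_j m_{ij}\alpha_j\Bigr)^{q^{k}}=\sum_j m_{ij}\alpha_j^{q^{k}}.
\]

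Next I would apply $\sigma$ entrywise to $D$, whose $(i,k)$-entry is $\alpha_i^{q^{k-1}}$. The identity above says exactly that $\sigma(D)=MD$ as matrices over $E$. Taking determinants and using the multiplicativity of $\det$ together with the fact that $\sigma$, being a ring homomorphism on $E$, commutes with the polynomial expression defining $\det$, we get
\[
\sigma(\delta)=\sigma(\det D)=\det(\sigma(D))=\det(MD)=(\det M)(\det D)=(\det\sigma)\,\delta,
\]
which is (\ref{sd1}).

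There is no real obstacle; the only point requiring care is the bookkeeping that $\sigma$ really does act on $D$ by left-multiplication by $M$ (not, say, by $M^{t}$ on the right), and this is forced by the choice of convention in which the $i$-th row of $D$ lists the Frobenius powers of $\alpha_i$. Once that is set up correctly, the identity $m_{ij}^{q^k}=m_{ij}$ does all the work.
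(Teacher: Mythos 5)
Your argument is correct and is essentially the paper's own proof: both write down the matrix $S_\sigma$ of $\sigma$ in the basis $\alpha_1,\dots,\alpha_n$, use the fact that its entries lie in $\F_q$ (hence are fixed by the Frobenius) to obtain $\sigma(D)=S_\sigma D$, and conclude by taking determinants. Your write-up simply makes explicit the ``straightforward calculation'' that the paper leaves to the reader.
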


\begin{proof}
 Since the roots $\alpha_i$ of $L$, $1\leq i\leq n$, are an $\F_q$-basis of the space of roots of $F$, 
 the action of $\sigma$ on $E$ is determined by its matrix with respect to this basis.
We thus set $S_\sigma$ to be the matrix of $\sigma$ with respect to the $\alpha_i$ basis. A straightforward calculation using the
fact that $S_\sigma$ has entries in $\F_q$ shows that we have the matrix equation
\[
S_\sigma D=\sigma(D),
\]
where $\sigma(D)$ is obtained from $D$ by applying $\sigma$ to the entries of $D$. We now take determinants on each side
to obtain \eqref{sd1}.
\end{proof}

\begin{cor}
Let $L$ be a monic $q$-polynomial of $q$-degree $n$ in $F[x]$ and let $E$ be its splitting field over $F$.
Let $G$ be the Galois group of $L$, considered as a subgroup of $GL(n,q)$. 
Then $G \le SL(n,q)$ if and only if $\delta \in F$.
\end{cor}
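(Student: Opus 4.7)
The plan is to derive the corollary directly from Lemma \ref{sigmalemma}, which already encodes essentially everything we need: for every $\sigma\in G$, we have $\sigma(\delta)=(\det\sigma)\,\delta$. Since $E/F$ is Galois (the polynomial $L$ has no repeated roots, as shown in Lemma \ref{vector_space}, so $E$ is separable and normal over $F$), the Galois correspondence tells us that $F$ is exactly the fixed field $E^{G}$. Therefore $\delta\in F$ if and only if $\sigma(\delta)=\delta$ for every $\sigma\in G$.

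For the forward direction, I would assume $G\le SL(n,q)$. Then $\det\sigma=1$ for every $\sigma\in G$, so Lemma \ref{sigmalemma} immediately yields $\sigma(\delta)=\delta$ for all $\sigma\in G$, and hence $\delta\in F$ by the fixed-field characterization.

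For the reverse direction, I would assume $\delta\in F$. Then $\sigma(\delta)=\delta$ for every $\sigma\in G$, so the equation $\sigma(\delta)=(\det\sigma)\,\delta$ from Lemma \ref{sigmalemma} becomes $(\det\sigma-1)\,\delta=0$. The one thing that needs to be flagged here is that $\delta\neq 0$; but this was established in the discussion preceding Lemma \ref{inF}, where $\delta$ appeared as the leading coefficient of $\det A$, a polynomial of $q$-degree exactly $n$. Cancelling $\delta$ gives $\det\sigma=1$ for every $\sigma\in G$, so $G\le SL(n,q)$.

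There is no real obstacle here; the corollary is essentially a direct reading of Lemma \ref{sigmalemma} through the lens of the Galois correspondence, and the proof should be only a few lines. The only subtle point worth making explicit is the appeal to $\delta\neq 0$ in the reverse implication, and the observation that $E/F$ is a Galois extension so that the fixed field of $G$ is precisely $F$.
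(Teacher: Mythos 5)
Your proof is correct and follows essentially the same route as the paper: both directions are read off from Lemma \ref{sigmalemma} together with the fact that $F$ is the fixed field of $G$. The paper states this in one line, while you usefully make explicit the two points it leaves implicit, namely that $E/F$ is Galois and that $\delta\neq 0$ allows cancellation in the reverse direction.
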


\begin{proof}
If $G \le SL(n,q)$ then $\sigma (\delta)=\delta$ for all $\sigma \in G$ by Lemma  \ref{sigmalemma}, and conversely.
\end{proof}

  We can now obtain a nice consequence of this analysis.

\begin{cor} \label{determinants_are_1}
Let $L$ be a monic $q$-polynomial of $q$-degree $n$ in $F[x]$ and let $E$ be its splitting field over $F$. Let the coefficient of $x$ in $L$ be $(-1)^n$. Let
$G$ be the Galois group of $L$, considered as a subgroup of $GL(n,q)$. Then $\det \sigma=1$ for all $\sigma\in G$ and thus
$G$ is a subgroup of $SL(n,q)$.
\end{cor}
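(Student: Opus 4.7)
The plan is to extract the value of $\delta^{q-1}$ directly from the hypothesis on the coefficient of $x$ in $L$, and then to invoke the preceding corollary (or Lemma~\ref{sigmalemma} directly).

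First, I would recall from the proof of Lemma~\ref{inF} the identity
\[
\text{(coefficient of $x$ in $L$)} = (-1)^n \delta^{q-1}.
\]
The hypothesis says that this coefficient equals $(-1)^n$, so by comparison $\delta^{q-1} = 1$. Hence $\delta$ is a nonzero element of $E$ satisfying $x^{q-1} = 1$, whose solutions in any field of characteristic $p$ are exactly the elements of $\F_q^{\times}$. Therefore $\delta \in \F_q \subseteq F$.

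Now I would apply Lemma~\ref{sigmalemma}. For any $\sigma \in G$, we have
\[
(\det \sigma)\, \delta = \sigma(\delta) = \delta,
\]
where the second equality holds because $\delta \in F$ and $G$ fixes $F$ elementwise. Since $\delta \neq 0$, we cancel to obtain $\det \sigma = 1$, and this holds for every $\sigma \in G$. Consequently $G \leq SL(n,q)$. (Equivalently, one could simply quote the previous corollary, since we have shown $\delta \in F$.)

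There is no real obstacle here: the entire statement is a short arithmetic consequence of the formulas already established in Section~2, namely the explicit expression of the $x$-coefficient of $L$ in terms of $\delta$, together with the transformation rule $\sigma(\delta) = (\det \sigma)\delta$. The only point to take care of is noting that $\delta^{q-1} = 1$ forces $\delta \in \F_q$ (rather than merely $\delta^{q-1} \in F$, which was all that Lemma~\ref{inF} gave).
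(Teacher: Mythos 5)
Your proof is correct and follows essentially the same route as the paper: from the identity that the coefficient of $x$ in $L$ equals $(-1)^n\delta^{q-1}$ you deduce $\delta^{q-1}=1$, hence $\delta\in\F_q\subseteq F$, and then Lemma~\ref{sigmalemma} gives $\det\sigma=1$ for all $\sigma\in G$. Nothing is missing.
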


\begin{proof}
With the notation as before, $(-1)^n \delta^{q-1}$ is the coefficient 
of $x$ in $L$ and hence the hypotheses imply $\delta^{q-1}=1$. This implies that
$\delta \in \F_q$. 
Since we are assuming that $\F_q$ is contained in $F$, $\delta \in F$,
and so each element $\sigma$ of $G$ fixes $\delta$. Equation \eqref{sd1}
 implies that $\det \sigma=1$, as required.
\end{proof}

We note that the same conclusion  holds if the coefficient of $x$ is $(-1)^n \mu^{q-1}$, where $\mu$ is any nonzero
element of $F$.

\section{Transitive actions on vector spaces}

\noindent 
The results of the previous sections lead immediately to the following conclusion concerning a special type of Galois group
action. 

\begin{thm} \label{transitive_subgroup_of_SL}
Let $L$ be a monic $q$-polynomial of $q$-degree $n$ in $F[x]$. Suppose that $L(x)/x$ is irreducible over $F$ and the coefficient
of $x$ in $L$ is $(-1)^n$. Then the Galois group $G$ of $L$ over $F$ is isomorphic to a subgroup of the special linear
group $SL(n,q)$ that acts transitively on the nonzero vectors of $\mathbb{F}_q^n$.
\end{thm}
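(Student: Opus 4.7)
The statement is really an assembly result: it just combines three facts already proved in the excerpt, so my plan is to observe exactly this and indicate how each hypothesis feeds the relevant earlier result.

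First I would invoke Lemma \ref{vector_space} applied to $L$. The hypothesis that the coefficient of $x$ in $L$ is $(-1)^n$ is in particular nonzero (since we are in characteristic $p$ but $(-1)^n \neq 0$ as an element of the prime field), so the lemma applies and tells us that the space $V$ of roots of $L$ in a splitting field $E$ is an $n$-dimensional $\F_q$-vector space, and that the Galois group $G$ of $E/F$ embeds naturally into $GL(n,q)$ via its action on $V$. After choosing a basis of $V$ over $\F_q$, we may identify $V$ with $\F_q^n$ and $G$ with a subgroup of $GL(n,q)$.

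Next I would apply Lemma \ref{transitive_action}, whose hypothesis that $L(x)/x$ is irreducible over $F$ is exactly our assumption. This shows that $G$ acts transitively on the nonzero elements of $V$, i.e.\ on the nonzero vectors of $\F_q^n$.

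Finally, I would use Corollary \ref{determinants_are_1} to show that $G$ is actually contained in $SL(n,q)$: this corollary says that under the hypotheses that $L$ is monic of $q$-degree $n$ with coefficient of $x$ equal to $(-1)^n$, every element of $G$ has determinant $1$. Combining the three conclusions gives exactly the theorem. There is no real obstacle here; the statement is stated explicitly to package together, for later use, the output of Lemma \ref{vector_space}, Lemma \ref{transitive_action}, and Corollary \ref{determinants_are_1}.
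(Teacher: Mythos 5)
Your proposal is correct and is essentially the paper's own argument: the paper gives no separate proof, stating only that the theorem follows immediately from the previous sections, which is exactly the assembly of Lemma \ref{vector_space}, Lemma \ref{transitive_action}, and Corollary \ref{determinants_are_1} that you describe.
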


We turn to consideration of a group $G$ satisfying the conclusions of the theorem above when $n$ is a prime, $r$, say. The main result of this paper is that
if $r$ is an odd prime and $q\not=2$, then $G$ must be equal to $SL(r,q)$.

We next provide some background detail. Let $q$ be a power of $p$. 
For the moment we let $n$ be any positive integer, later we shall assume that $n=r$ a prime.
We shall choose the finite  field
$\F_{q^n}$ as our  $n$-dimensional vector space over $\mathbb{F}_q$ on which $GL(n,q)$ acts.
Let $\alpha$ be a generator of the multiplicative group $\F_{q^n}\setminus \{0\}$ and define 
$\tau:\F_{q^n}\to \F_{q^n}$ by $\tau(z)=\alpha z$.
Then $\tau$ generates  a cyclic subgroup $Z$ of order $q^n-1$ in $GL(n,q)$, known as a Singer cycle.

Let $\sigma$ be the Frobenius automorphism of $\F_{q^n}$ defined
by $\sigma(z)=z^q$ for all $z\in \F_{q^n}$. 
Then $\sigma$ acts $\F_q$-linearly on $\F_{q^n}$ and
we find that $\sigma\tau\sigma^{-1}=\tau^q$, so that $\sigma$ normalizes $Z$.

The  group generated by $\tau$ and $\sigma$  acts on $\F_{q^n}$.
We call this group the semilinear group of degree 1 and denote it by $\Gamma L(1,q^n)$. 
Thus $\Gamma L(1,q^n)$ is a subgroup
of $GL(n,q)$ of order $n(q^n-1)$, and 
$\Gamma L(1,q^n)$ is the semidirect product of a normal cyclic subgroup 
of order $q^n-1$ (the Singer cycle $Z$) with a cyclic group of order $n$. 
It is known that the normalizer of $Z$ in $GL(n,q)$ is $\Gamma L(1,q^n)$. Furthermore,
the determinant homomorphism from $\Gamma L(1,q^n)$ into $\F_{q}\setminus \{0\}$ is surjective
(see, for example, Satz 7.3, p.187, of \cite{Hup}). 
Thus, if we define
$$\Gamma L_1(1,q^n):=\Gamma L(1,q^n)\cap SL(n,q)$$
then $\Gamma L_1(1,q^n)$ has order $n(q^n-1)/(q-1)$. 

We study the action of  $\Gamma L_1(1,q^n)$ in the cases $q$ even and $q$ odd separately.
The case of $q$ odd is easier.

The following lemma shows that $\Gamma L_1(1,q^n)$ is not transitive in the cases under consideration here.

\begin{lemma} \label{no_transitive_action}
Let $n$ and $q$ both be odd. Then $\Gamma L_1(1,q^n)$ 
does not act transitively on $\F_{q^n}\setminus \{0\}$.
\end{lemma}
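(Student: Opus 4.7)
The plan is to show that $\Gamma L_1(1,q^n)$ has $q-1 \ge 2$ orbits on $\F_{q^n}\setminus\{0\}$, and therefore cannot be transitive. First I would identify $\Gamma L_1(1,q^n)$ concretely by computing the determinants of the two generators $\tau$ and $\sigma$ of $\Gamma L(1,q^n)$. As the $\F_q$-linear map given by multiplication by $\alpha$, $\tau$ has determinant equal to the norm
\[
\det\tau = N_{\F_{q^n}/\F_q}(\alpha) = \alpha^{(q^n-1)/(q-1)},
\]
which is a generator of $\F_q\setminus\{0\}$ because $\alpha$ generates $\F_{q^n}\setminus\{0\}$. Hence $\tau^k\in SL(n,q)$ exactly when $(q-1)\mid k$, so $\langle\tau\rangle\cap SL(n,q)=\langle\tau^{q-1}\rangle$ has order $(q^n-1)/(q-1)$. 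For $\sigma$ I would use a normal basis $\{\beta,\beta^q,\dots,\beta^{q^{n-1}}\}$ of $\F_{q^n}$ over $\F_q$; on this basis $\sigma$ acts as a single $n$-cycle, so its matrix is a permutation matrix with $\det\sigma=(-1)^{n-1}$. Since $n$ is odd this equals $1$, and therefore $\Gamma L_1(1,q^n)=\langle\tau^{q-1},\sigma\rangle$.

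\textbf{Orbit analysis.} The cyclic subgroup $\langle\tau^{q-1}\rangle$ acts on $\F_{q^n}\setminus\{0\}$ by multiplication by $\alpha^{q-1}$. Thus the orbit of any nonzero $z$ is the coset $zH$, where $H=\langle\alpha^{q-1}\rangle$ is the unique subgroup of $\F_{q^n}\setminus\{0\}$ of index $q-1$, equivalently the subgroup of $(q-1)$-st powers. This partitions $\F_{q^n}\setminus\{0\}$ into $q-1$ orbits, each of size $(q^n-1)/(q-1)$. It then suffices to check that adjoining $\sigma$ does not fuse any of these orbits; but for every $z\in\F_{q^n}\setminus\{0\}$,
\[
\sigma(z)=z^q=z\cdot z^{q-1}\in zH,
\]
so $\sigma$ preserves each coset of $H$. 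Therefore $\Gamma L_1(1,q^n)$ has exactly $q-1$ orbits on $\F_{q^n}\setminus\{0\}$, and since $q$ is odd we have $q-1\ge 2$, proving non-transitivity.

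\textbf{Main obstacle.} The only non-routine step is the computation of $\det\sigma$; this is where the hypothesis that $n$ is odd is used in an essential way. If $n$ were even then $\det\sigma=-1\neq 1$ for $q$ odd, so $\sigma\notin SL(n,q)$ and the generating set for $\Gamma L_1(1,q^n)$ would need to be replaced by a product such as $\tau^{(q-1)/2}\sigma$, which would merge cosets of $H$ and the orbit analysis would change. Apart from this determinant, everything reduces to the observation that $q\equiv 1\pmod{q-1}$, which is precisely what makes $\sigma$ act trivially on the coset space $(\F_{q^n}\setminus\{0\})/H$.
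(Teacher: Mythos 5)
Your proof is correct, but it takes a genuinely different route from the paper. The paper's argument is a one-line counting observation: transitivity would force the orbit size $q^n-1$ to divide $|\Gamma L_1(1,q^n)|=n(q^n-1)/(q-1)$, hence $q-1\mid n$, which is impossible since $q-1$ is even and $n$ is odd. You instead pin down $\Gamma L_1(1,q^n)$ exactly, via $\det\tau=N_{\F_{q^n}/\F_q}(\alpha)$ (a generator of $\F_q^*$) and $\det\sigma=(-1)^{n-1}=1$ from a normal basis, and then compute the orbit structure: the orbits are precisely the $q-1$ cosets of the subgroup $H$ of $(q-1)$-st powers, since $\sigma(z)=z\cdot z^{q-1}\in zH$. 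All steps check out (including the identification $\Gamma L_1(1,q^n)=\langle\tau^{q-1},\sigma\rangle$, which does require $\det\sigma=1$ and hence $n$ odd). What your version buys: it gives the exact number of orbits rather than a bare contradiction, and it only uses $q>2$ together with $\det\sigma=1$, so it also covers even $q>2$ (for odd $n$), territory the paper has to treat separately and with more effort in Lemma \ref{no_transitive_action_in_characteristic_2}; indeed your normal-basis determinant computation is exactly the device the paper deploys in that later proof. What the paper's version buys is brevity: no structural analysis of the group is needed at all. Your closing remark about even $n$ is apt --- e.g. $\Gamma L_1(1,9)$ is a quaternion group of order $8$ acting regularly on $\F_9\setminus\{0\}$, so the restriction on the parity of $n$ is not an artifact of the method.
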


\begin{proof}
Suppose that $\Gamma L_1(1,q^n)$ acts transitively on $\F_{q^n}\setminus \{0\}$. 
It follows then that $q^n-1$ divides
$n(q^n-1)/(q-1)$ and hence $q-1$ divides $n$. This is impossible as $q-1$ is even and $n$ odd under the stated hypothesis.
\end{proof}

We need an improvement of Lemma \ref{no_transitive_action} to deal with the case when $q$ is a power of 2. 
We now restrict ourselves
to prime dimension $r$, as this is the focus of our paper.

\begin{lemma} \label{no_transitive_action_in_characteristic_2}
Let $r$ be an odd prime and let $q=2^m$.
The group $\Gamma L_1(1,q^r)$  acts transitively on $\F_{q^r}\setminus \{0\}$
if and only if $q=2$.
\end{lemma}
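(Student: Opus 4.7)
The plan is to handle the two implications separately, with almost all the work in the forward direction. The easier direction ($q=2$) follows because $\F_2\setminus\{0\}=\{1\}$, which gives $SL(r,2)=GL(r,2)$ and hence $\Gamma L_1(1,2^r)=\Gamma L(1,2^r)$; this group contains the Singer cycle $Z$, which already acts transitively on the nonzero vectors of $\F_{2^r}$.

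For the forward implication the strategy is to pin down $\Gamma L_1(1,q^r)$ explicitly and then compute the orbit of the element $1\in\F_{q^r}\setminus\{0\}$. First I would record two determinant calculations. The determinant of $\tau$ equals the norm $N_{\F_{q^r}/\F_q}(\alpha)=\alpha^{(q^r-1)/(q-1)}$, a generator of $\F_q\setminus\{0\}$, so $\tau^a$ lies in $SL(r,q)$ exactly when $(q-1)\mid a$. The determinant of the Frobenius $\sigma$ lies in $\F_q\setminus\{0\}$; choosing a normal basis of $\F_{q^r}$ over $\F_q$ realizes $\sigma$ as the permutation matrix of an $r$-cycle, with determinant $(-1)^{r-1}=1$ since $r$ is odd. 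Combining, $\Gamma L_1(1,q^r)=\langle \tau^{q-1},\sigma\rangle$, of order $r(q^r-1)/(q-1)$ as stated in the paper.

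The payoff is then immediate: $\sigma^j(1)=1^{q^j}=1$ for every $j$, so a general element $\tau^{(q-1)i}\sigma^j$ of $\Gamma L_1(1,q^r)$ sends $1$ to $\alpha^{(q-1)i}$. Thus the orbit of $1$ is the subgroup of $(q-1)$-th powers in $\F_{q^r}\setminus\{0\}$, which has order $(q^r-1)/(q-1)$. For transitivity on $\F_{q^r}\setminus\{0\}$ this orbit must have size $q^r-1$, forcing $q-1=1$, i.e., $q=2$. I do not anticipate any significant obstacle; the heart of the argument is simply the observation that the Frobenius fixes $1$, so that the orbit of $1$ collapses to the image of the $(q-1)$-power map. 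The divisibility argument used in Lemma \ref{no_transitive_action} is too weak here because $(q-1)\mid r$ can hold when $r$ is a Mersenne prime equal to $q-1$, which is precisely why the orbit refinement is needed.
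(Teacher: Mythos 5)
Your proof is correct, and it takes a genuinely different route from the paper's. The paper recycles the counting argument of Lemma \ref{no_transitive_action}: transitivity forces $(q-1)\mid r$, hence $q-1=1$ or $q-1=r$; in the Mersenne case $q-1=r$ the order of $\Gamma L_1(1,q^r)$ is exactly $q^r-1$, so a transitive action would be regular, and the contradiction is that the Frobenius $\sigma$ lies in $SL(r,q)$ (normal basis, $r$-cycle permutation matrix, determinant $1$ in characteristic $2$) yet fixes the nonzero element $1$. You instead pin down $\Gamma L_1(1,q^r)=\{\tau^{(q-1)i}\sigma^j\}$ exactly, using $\det\tau=N_{\F_{q^r}/\F_q}(\alpha)=\alpha^{(q^r-1)/(q-1)}$ (a generator of $\F_q\setminus\{0\}$) together with $\det\sigma=1$, and then compute the orbit of $1$ directly: since $\sigma$ fixes $1$, the orbit is the group of $(q-1)$-th powers, of size $(q^r-1)/(q-1)$, which forces $q=2$. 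Both arguments ultimately rest on the same two facts ($\sigma\in SL$ via a normal basis, and $\sigma(1)=1$), but yours avoids the case split and the regularity trick, needs neither the primality of $r$ nor Lemma \ref{no_transitive_action} (indeed it proves the statement for arbitrary $q$-degree $n$ in characteristic $2$, and even quantifies the failure of transitivity by identifying the orbit of $1$), at the modest extra cost of the determinant-of-$\tau$ computation; the paper's version is shorter given that Lemma \ref{no_transitive_action} and the order formula $|\Gamma L_1(1,q^r)|=r(q^r-1)/(q-1)$ are already in place. Your easy direction ($q=2$, via $\Gamma L_1=\Gamma L$ containing the transitive Singer cycle) is the same as the paper's.
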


\begin{proof}
In the case that $q=2$, $\Gamma L_1(1,2^n)$  does act transitively on $\F_{2^n}\setminus \{0\}$ 
for all positive integers $n$,
because $GL(n,2)=SL(n,2)$ and thus $\Gamma L_1(1,2^n)=\Gamma L(1,2^n)$. 

Conversely, suppose that $\Gamma L_1(1,q^r)$ acts transitively on $\F_{q^r}\setminus \{0\}$. 
Then by the proof of Lemma \ref{no_transitive_action},
$q-1$ divides $r$. Thus, since $r$ is a prime, either $q-1=1$ or $q-1=r$. 

The proof when $q=2$ is complete.
Let us suppose that we are in the other case, where $q-1=r$.
It remains to find a contradiction to the hypothesis that
$\Gamma L_1(1,q^r)$  acts transitively on $\F_{q^r}\setminus \{0\}$. 
Since $q-1=r$  we have $|\Gamma L_1(1,q^r)|=q^r-1$. 
Furthermore, since we are assuming that the action of $\Gamma L_1(1,q^r)$ is transitive,
this action must be regular, because $|\F_{q^r}\setminus \{0\}|=q^r-1$.

The normal basis theorem for finite fields implies that $\F_{q^r}$ has an $\F_q$-basis consisting of elements $\beta$, $\sigma(\beta)$,
\dots, $\sigma^{r-1}(\beta)$ for some element $\beta$ of $\F_{q^r}$. Clearly, $\sigma$ permutes the basis vectors as an $r$-cycle.
It follows that $\det \sigma=1$, since we are working in characteristic 2, so $\sigma$ is in $SL(r,q)$
and hence in $\Gamma L_1(1,q^r)$.
However, $\sigma$ fixes the nonzero element 1 of $\F_{q^r}$, 
which contradicts the fact that $\Gamma L_1(1,q^r)$  acts regularly.
\end{proof}

We have used the finite field $\F_{q^r}$ as our vector space $\F_q^r$, because this makes 
the subgroups we are using easy to see. Since these two vector spaces are isomorphic, the same
results of course hold for $\F_q^r$.

\section{Proof of Main Theorem}

We now present the  classification of subgroups of 
$SL(r,q)$ that act transitively on $\F_q^r\setminus \{0\}$, which follows from results in the literature.

 \begin{thm}\label{result_from_FM} 
 Let $q$ be a prime power, let $r$ be an odd prime, and let $G$ be a subgroup of 
 $SL(r,q)$ that acts transitively on the nonzero vectors of $\mathbb{F}_q^r$.
 Then either $G=SL(r,q)$ or $G\leq \Gamma L(1,q^r)$.
 \end{thm}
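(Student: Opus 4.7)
My plan is to invoke the classification of subgroups of $GL(n,q)$ that act transitively on the nonzero vectors of $\mathbb{F}_q^n$, due principally to Hering, building on earlier contributions of Huppert and Foulser in the solvable case. Hering's theorem asserts that such a group $G$ falls into one of four families: (i) $G \le \Gamma L(1, q^n)$; (ii) $G$ contains a normal quasi-simple classical group (special linear, symplectic, unitary, or orthogonal) acting irreducibly on its natural module, possibly over a subfield; (iii) in characteristic $2$, $G$ contains $G_2(q')'$ for a suitable subfield $\mathbb{F}_{q'}$; or (iv) $(G, n, q)$ belongs to a finite explicit list of low-dimensional sporadic examples.

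The proof then walks through each alternative under the hypotheses that $n = r$ is an odd prime and $G \le SL(r, q)$. Alternative (i) is precisely the second conclusion of the theorem. For (ii), the natural module of the classical group has dimension $m$ over some extension $\mathbb{F}_{q^{r/m}}$, so $m$ must divide $r$. Primality of $r$ forces $m = 1$ or $m = r$. The value $m = 1$ collapses back into (i) as a Singer subgroup inside $\Gamma L(1, q^r)$. The value $m = r$ puts the classical group over $\mathbb{F}_q$ in natural degree $r$; the symplectic and orthogonal types are excluded because they require even natural degree, and the unitary type is excluded because its natural module has even dimension over its base field. Only the special linear possibility $SL(r, q)$ survives, giving $SL(r, q) \trianglelefteq G$, which combined with $G \le SL(r, q)$ yields $G = SL(r, q)$. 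Alternative (iii) cannot occur because the natural module for $G_2$ has dimension $6$, and $6$ does not divide the odd prime $r$.

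The main obstacle is the sporadic list in (iv), which Hering tabulates explicitly. These exceptions live in small, typically composite dimensions such as $2$, $4$, $6$, and $10$, with a few entries in dimension $3$. Since our dimension $r$ is an odd prime, the only way an exception can be relevant is if $r$ matches one of those exceptional dimensions, which in practice narrows attention to $r = 3$. A short, direct verification against Hering's table for dimension $3$ is then required: one checks that every sporadic transitive subgroup of $GL(3, q)$ either fails to lie inside $SL(3, q)$ or is already subsumed by $\Gamma L_1(1, q^3)$, so does not contribute a genuinely new possibility. Once this finite case analysis is dispatched, the dichotomy $G = SL(r, q)$ versus $G \le \Gamma L(1, q^r)$ is established.
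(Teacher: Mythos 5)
At the strategic level you are doing exactly what the paper does: the paper offers no independent argument for this theorem, but cites the classification of transitive linear groups due to Hering (through chapter 3 of Liebeck--Praeger--Saxl, with Sambale and Ferraguti--Micheli as alternative sources), which is precisely the classification you invoke. The difference is that you try to carry out the prime-degree reduction yourself rather than quoting a reference where it is already extracted, and it is in that reduction that your write-up has genuine problems.

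Three concrete points. First, your statement of Hering's theorem is off: unitary and orthogonal groups do not occur in it, because they preserve a form and are never transitive on nonzero vectors; moreover your reasons for discarding them are wrong as written (orthogonal groups exist in every odd degree, and $SU(3,q')$ has a $3$-dimensional natural module), so your elimination of these cases only survives because they were never in the list. Second, Hering's classification is keyed to the prime field: writing $q=p^a$, the normal classical subgroup is $SL(m,q')$ (or $Sp(m,q')$, or $G_2(q')'$ with $m=6$) where $(q')^m=p^{ra}$, so a priori $m$ divides $ra$, not $r$. To conclude $m\mid r$ you must argue that, since $G\leq GL(r,q)$, the scalars $\F_q$ centralize the irreducible normal subgroup and hence embed in its endomorphism field $\F_{q'}$; without this step, configurations such as $SL(2,p^3)\leq GL(6,p)$ when $r=3$, $q=p^2$, or $G_2(2)'\leq GL(6,2)$ when $r=3$, $q=4$, are untouched by your divisibility argument. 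Third, the sporadic cases are misremembered: they occur only in prime-field dimensions $2$, $4$ and $6$ (there are none in dimension $3$ or $10$), so for $r$ an odd prime the only possible overlap is $r=3$ with $q\in\{4,9\}$, where one must rule out that the exceptional groups inside $GL(6,2)$ and $GL(6,3)$ (namely $G_2(2)'$ and $SL(2,13)$) sit inside $SL(3,4)$, respectively $SL(3,9)$; they do not, since neither group has a faithful $3$-dimensional representation in the relevant characteristic, so their $6$-dimensional modules carry no $\F_4$- or $\F_9$-structure. You defer exactly this verification with ``one checks,'' yet it is the one place where your argument has content beyond the citation; either supply it, or do as the paper does and cite a source (Liebeck--Praeger--Saxl chapter 3, or Ferraguti--Micheli for $q$ odd) in which the prime-degree statement is already proved.
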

 
 A proof of Theorem \ref{result_from_FM} can be found in chapter 3 of the book \cite{LPS}
by Liebeck et al.
The proof uses 
theorems of Hering \cite{He1}, \cite{He2}.
We note that Hering's work uses the classification of finite simple
groups. We also note that there is a  presentation given by Sambale, \cite{S}, Theorem 15.1, p.197, who provides some corrections and explanations to previously published
statements, but this discussion deals only with $q$ equal to a prime.
The statement of Theorem \ref{result_from_FM} 
for $q$ odd can also be derived from the statement of Proposition 3.2 in Ferraguti-Micheli \cite{FM}.

\begin{thm} \label{only_SL(r)_acts_transitively}
Let $r$ be an odd prime  and let $q>2$ be a power of a prime.  Then the only subgroup of 
$SL(r,q)$ that acts transitively on $\F_q^r\setminus \{0\}$ is $SL(r,q)$ itself.
\end{thm}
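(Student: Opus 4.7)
The plan is to combine Hering's classification (Theorem \ref{result_from_FM}) with the non-transitivity results on $\Gamma L_1(1,q^r)$ established in Lemmas \ref{no_transitive_action} and \ref{no_transitive_action_in_characteristic_2}. Let $G$ be a subgroup of $SL(r,q)$ acting transitively on $\F_q^r\setminus\{0\}$. By Theorem \ref{result_from_FM}, we have two cases: either $G=SL(r,q)$, which is what we want, or $G\leq \Gamma L(1,q^r)$.

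In the second case, I would argue by contradiction. Since $G\leq SL(r,q)$ by hypothesis and also $G\leq \Gamma L(1,q^r)$, we have
\[
G \leq \Gamma L(1,q^r)\cap SL(r,q) = \Gamma L_1(1,q^r).
\]
Because $G$ acts transitively on $\F_{q^r}\setminus\{0\}$, the larger group $\Gamma L_1(1,q^r)$ containing $G$ must also act transitively on $\F_{q^r}\setminus\{0\}$. This is the key point to contradict.

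To derive the contradiction, I would split on the parity of $q$. Since $r$ is an odd prime, if $q$ is odd then Lemma \ref{no_transitive_action} (applied with $n=r$) asserts that $\Gamma L_1(1,q^r)$ does not act transitively on $\F_{q^r}\setminus\{0\}$. If instead $q$ is a power of $2$, then the hypothesis $q>2$ lets me invoke Lemma \ref{no_transitive_action_in_characteristic_2}, which gives the same non-transitivity conclusion (since that lemma characterizes $q=2$ as the unique exception). In either case, the transitivity of $\Gamma L_1(1,q^r)$ fails, a contradiction. Hence the second alternative from Theorem \ref{result_from_FM} cannot occur, and $G=SL(r,q)$.

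There is no substantive obstacle here: the heavy lifting has already been done in the cited classification (which itself rests on CFSG via Hering's theorems) and in the two earlier lemmas ruling out transitivity of $\Gamma L_1(1,q^r)$. The proof is essentially a two-line case analysis, with the only subtlety being that the hypothesis $q>2$ is exactly what is needed to dispatch the characteristic-$2$ case via Lemma \ref{no_transitive_action_in_characteristic_2}; this explains why the theorem fails for $q=2$ and why that case is deferred to Section 5.
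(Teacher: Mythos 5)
Your proposal is correct and follows the paper's proof exactly: apply Theorem \ref{result_from_FM}, note that $G\leq \Gamma L(1,q^r)$ together with $G\leq SL(r,q)$ forces $G\leq \Gamma L_1(1,q^r)$, and rule that out via Lemma \ref{no_transitive_action} for $q$ odd and Lemma \ref{no_transitive_action_in_characteristic_2} for $q$ even with $q>2$. The only difference is that you spell out the overgroup-transitivity step more explicitly than the paper does.
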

 
\begin{proof}
Let $G$ be a subgroup of $SL(r,q)$ that enjoys the stated transitivity property. 
By Theorem \ref{result_from_FM},  either $G=SL(r,q)$ or $G\leq \Gamma L_1 (1,q^r)$.
By Lemma  \ref{no_transitive_action} when $q$ is odd,
and Lemma  \ref{no_transitive_action_in_characteristic_2}  when $q$ is even, 
$G\leq \Gamma L_1 (1,q^r)$ is impossible.
\end{proof}

We can now deduce the promised theorem relating to the Galois group of an irreducible $q$-polynomial whose
$q$-degree is a prime.

\begin{thm} \label{Galois_group_is_SL(r,q)}
Let $F$ be a field of characteristic $p$ that contains $\F_q$, where $q$ is a power of $p$.
Let $L$ be a monic $q$-polynomial of $q$-degree $r$ in $F[x]$, where $r$ is an odd prime.  Suppose that the coefficient
of $x$ in $L$ is $-1$ and
$L(x)/x$ is irreducible over $F$. Then the Galois group of $L$ over $F$ is isomorphic to the special linear
group $SL(r,q)$, provided that $q>2$.\\ (More generally, the same conclusion holds if the coefficient of $x$ in $L$ is $-\mu^{q-1}$, where $\mu$ is any nonzero
element of $F$.)
\end{thm}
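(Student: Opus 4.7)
The proof is essentially an assembly of the machinery already established in the paper, so the plan is to chain the previous results together rather than invent new ideas.

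First, I would invoke Lemma \ref{vector_space} and Lemma \ref{transitive_action}: since $L$ has a nonzero $x$-coefficient and $L(x)/x$ is irreducible over $F$, the Galois group $G$ of $L$ over $F$ embeds into $GL(r,q)$ and acts transitively on the nonzero elements of the space of roots $V\cong \F_q^r$. Next, since $r$ is odd, the hypothesis that the $x$-coefficient equals $-1$ is exactly the hypothesis $(-1)^r$ of Corollary \ref{determinants_are_1}, so that corollary places $G$ inside $SL(r,q)$.

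At this point $G$ is a subgroup of $SL(r,q)$ acting transitively on $\F_q^r\setminus\{0\}$, and $r$ is an odd prime with $q>2$. Theorem \ref{only_SL(r)_acts_transitively} says the only such subgroup is $SL(r,q)$ itself, so $G=SL(r,q)$, which is the desired conclusion.

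For the parenthetical generalization in which the $x$-coefficient has the form $-\mu^{q-1}$ for some nonzero $\mu\in F$, the same argument works verbatim: the remark immediately after Corollary \ref{determinants_are_1} records that in this situation one still has $\delta^{q-1}\in F$ and $\delta$ lies in $F$ after adjusting by $\mu$, so Lemma \ref{sigmalemma} still forces $\det\sigma=1$ for every $\sigma\in G$. Thus $G\le SL(r,q)$, and transitivity together with Theorem \ref{only_SL(r)_acts_transitively} again forces $G=SL(r,q)$.

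There is essentially no obstacle in the proof itself, since the substantive content has been absorbed into the previous sections; the only thing to be careful about is matching the hypothesis ``coefficient of $x$ equals $-1$'' to Corollary \ref{determinants_are_1} by using the parity of $r$, and verifying that the generalization with $-\mu^{q-1}$ is covered by the remark following that corollary. The real work of the theorem sits in the preceding results, in particular Theorem \ref{result_from_FM}, which relies on the classification of finite simple groups via Hering's theorems.
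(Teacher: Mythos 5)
Your proposal is correct and follows essentially the same route as the paper: the paper packages your first two steps (Lemmas \ref{vector_space}, \ref{transitive_action} and Corollary \ref{determinants_are_1}, with $(-1)^r=-1$ for odd $r$) into Theorem \ref{transitive_subgroup_of_SL} and then combines it with Theorem \ref{only_SL(r)_acts_transitively} exactly as you do, and your handling of the $-\mu^{q-1}$ variant via the remark after Corollary \ref{determinants_are_1} matches the paper's intent.
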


Theorem  \ref{Galois_group_is_SL(r,q)}
is immediate from Theorem  \ref{transitive_subgroup_of_SL}
and Theorem  \ref{only_SL(r)_acts_transitively}. In the next section, we deal with the $q=2$ version of Theorem
\ref{Galois_group_is_SL(r,q)}, which involves the appearance of two further subgroups of
$SL(r,2)$.

{\bf Example:} We consider an example to illustrate Theorem \ref{Galois_group_is_SL(r,q)}. 
Let $E$ be any field of characteristic $p$ that contains
$\F_q$. Let
$F$ be the function field $E(t)$, where $t$ is an indeterminate over $E$. Let $r$ be an odd prime and let $f(x)$ be a monic $q$-polynomial in $E[x]$ of $q$-degree $r$, with the coefficient of $x$ in $f$ equal to $-1$. Let $g(x)$ also be a nonzero $q$-polynomial in $E[x]$ whose $q$-degree is less than $r$, with
the coefficient of $x$ in $g$ equal to 0. Suppose also that $f(x)/x$ and $g(x)/x$ are relatively prime. Then the polynomial $f(x)+tg(x)$ in $F[x]$ is a monic
$q$-polynomial of $q$-degree $r$, 
the  coefficient of $x$  is $-1$,
and $(f(x)+tg(x))/x$ is irreducible in $F[x]$.   It follows that this polynomial has Galois group $SL(r,q)$ over $F$.

We remark that the hypotheses of Theorem  \ref{Galois_group_is_SL(r,q)}
can never be satisfied when $F$ is a finite field, as is easy to show directly.

\section{Analysis when $q=2$}

An amendment needs to be made to Theorem \ref{Galois_group_is_SL(r,q)} to describe what happens
when $q=2$. As we mentioned above, two further subgroups of $SL(r,2)$ related to $\Gamma L(1,2^r)$ must be taken into consideration. 

We recall that $GL(n,2)=SL(n,2)$ and thus $\Gamma L_1(1,2^n)=\Gamma L(1,2^n)$. 

Let $r$ be an odd prime. We know that the group $\Gamma L(1,2^r)$ acts transitively on the nonzero elements of an $r$-dimensional vector space over $\mathbb{F}_2$. The Singer cycle subgroup $Z$ of $\Gamma L(1,2^r)$ also acts transitively. A simple argument will show
that $Z$ is the only proper subgroup that has this transitive action.

\begin{lemma} \label{transitive_action_of_subgroup}
Let $r$ be an odd prime and let $H$ be a proper subgroup of $\Gamma L(1,2^r)$ that acts
transitively on the nonzero elements of an $r$-dimensional vector space over $\mathbb{F}_2$. 
Then $H$ is the Singer cycle $Z$ in $\Gamma L(1,2^r)$.
\end{lemma}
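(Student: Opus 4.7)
The plan is a short order-theoretic argument together with Fermat's little theorem. First I would fix the orders: $|\Gamma L(1,2^r)| = r(2^r-1)$, with $Z$ the normal Singer cyclic subgroup of order $2^r-1$ and quotient $\Gamma L(1,2^r)/Z$ cyclic of order $r$.

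Next I would pin down $|H|$. Since $H$ acts transitively on a set of $2^r-1$ nonzero vectors, $(2^r-1) \mid |H|$, and of course $|H| \mid r(2^r-1)$. Because $r$ is prime, the only multiples of $2^r-1$ dividing $r(2^r-1)$ are $2^r-1$ and $r(2^r-1)$ itself, and the latter is excluded since $H$ is proper. Hence $|H| = 2^r-1 = |Z|$, and it will suffice to show $H \subseteq Z$.

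To see this I would project $H$ into the quotient $\Gamma L(1,2^r)/Z \cong \Z/r$. By the second isomorphism theorem, $H/(H\cap Z)$ embeds in $\Z/r$, so its order is $1$ or $r$. If the index were $r$, then $|H\cap Z| = (2^r-1)/r$ would be an integer, forcing $r \mid 2^r-1$. But by Fermat's little theorem $2^r \equiv 2 \pmod{r}$, giving $2^r - 1 \equiv 1 \pmod{r}$, a contradiction. Hence $H \subseteq Z$, and comparing orders gives $H = Z$.

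There is no serious obstacle here; the only subtlety is noticing that the divisibility analysis of $|H|$ rests on two separate primality facts about $r$, namely that $r$ is prime (to restrict the possible orders of $H$) and that Fermat's congruence applies (to rule out the non-trivial image in the cyclic quotient of order $r$).
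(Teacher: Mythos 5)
Your proof is correct and follows essentially the same route as the paper: both arguments pin down $|H|=2^r-1$ from transitivity, properness, and the primality of $r$, and then use the normality of $Z$ together with the coprimality of $2^r-1$ and $r$ to conclude $H=Z$. The only difference is cosmetic: where the paper cites the fact that a normal Hall subgroup is the unique subgroup of its order, you derive the needed special case inline via the second isomorphism theorem and Fermat's little theorem (which is exactly the ``easy to show'' coprimality the paper leaves to the reader).
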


\begin{proof}
Recall that $\Gamma L(1,2^r)$ has order $r(2^r-1)$.
 As $H$ acts transitively,
$2^r-1$ divides $|H|$. Thus,
since $H$ is a proper subgroup of order divisible by $2^r-1$, and 
$r$ is prime, we must have $|H|=2^r-1$.

It is easy to show that $r$ is relatively prime to $2^r-1$. Thus, since
$Z$ is a normal subgroup of $\Gamma L(1,2^r)$ whose order is relatively prime
to its index in $\Gamma L(1,2^r)$, elementary group theory shows that $Z$ is the unique subgroup 
of $\Gamma L(1,2^r)$ of order $2^r-1$ (in the language of group theory, $Z$ is a normal Hall subgroup of $\Gamma L(1,2^r)$
and hence unique of its order). Thus $H=Z$, as claimed.
\end{proof}

We have reached  a position where we can investigate the analogue of Theorem \ref{Galois_group_is_SL(r,q)} when $q=2$.

\begin{lemma} \label{three_subgroups_act_transitively}
Let $r$ be an odd prime.  Then, up
to conjugacy in $SL(r,2)$, the only proper subgroups of 
$SL(r,2)$ that acts transitively on $\F_2^r\setminus \{0\}$ are $\Gamma L(1,2^r)$
and the Singer cycle subgroup $Z$ of $\Gamma L(1,2^r)$
.
\end{lemma}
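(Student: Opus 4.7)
The plan is to chain together Theorem \ref{result_from_FM} with Lemma \ref{transitive_action_of_subgroup}, with the entire argument being essentially a bookkeeping exercise now that these two results are in hand. Let $G$ be a proper subgroup of $SL(r,2)$ acting transitively on $\F_2^r\setminus\{0\}$. Since $SL(r,2)=GL(r,2)$, we have $\Gamma L_1(1,2^r)=\Gamma L(1,2^r)$, so Theorem \ref{result_from_FM} gives that $G$ is either $SL(r,2)$ itself or contained in $\Gamma L(1,2^r)$. The hypothesis that $G$ is proper rules out the first alternative, so $G \le \Gamma L(1,2^r)$.

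Now I would split into two cases. If $G=\Gamma L(1,2^r)$, then $G$ is the first of the two groups claimed in the statement. Otherwise $G$ is a proper subgroup of $\Gamma L(1,2^r)$ that still acts transitively on the nonzero vectors of $\F_2^r$, so Lemma \ref{transitive_action_of_subgroup} applies verbatim and forces $G=Z$, the Singer cycle. Either way $G$ is one of the two advertised subgroups.

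The remaining point to address is the phrase \emph{up to conjugacy in $SL(r,2)$}. This is needed because the identification of $\F_2^r$ with $\F_{2^r}$ (via a choice of $\F_2$-basis, or equivalently a choice of generator of $\F_{2^r}^\times$) is not canonical, and different identifications produce different copies of $\Gamma L(1,2^r)$ and of $Z$ inside $GL(r,2)$. I would remark briefly that any two Singer cycles in $GL(r,2)$ are conjugate (they are the images of $\F_{2^r}^\times$ under the two regular representations determined by the two bases, and a change-of-basis matrix conjugates one onto the other), and hence so are their normalizers $\Gamma L(1,2^r)$. Thus the final conclusion is invariant under conjugation in $SL(r,2)=GL(r,2)$.

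I do not expect a genuine obstacle here: all of the nontrivial content already sits in Theorem \ref{result_from_FM} (which bounds $G$ above by $\Gamma L(1,2^r)$) and in Lemma \ref{transitive_action_of_subgroup} (which identifies the only proper transitive subgroup of $\Gamma L(1,2^r)$ as $Z$). The only subtle piece is articulating the up-to-conjugacy clause cleanly, and that is a one-line remark rather than a real difficulty.
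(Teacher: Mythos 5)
Your proposal is correct and follows exactly the paper's own argument: apply Theorem \ref{result_from_FM} to reduce to $G\leq\Gamma L(1,2^r)$, then invoke Lemma \ref{transitive_action_of_subgroup} to conclude $G$ is either $\Gamma L(1,2^r)$ or the Singer cycle $Z$. Your additional remark on the conjugacy of Singer cycles just makes explicit the ``up to conjugacy'' clause that the paper's proof leaves implicit, and is a welcome clarification rather than a deviation.
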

 
\begin{proof}
Let $G$ be a subgroup of $SL(r,2)$ that enjoys the stated transitivity property. 
By Theorem \ref{result_from_FM},  either $G=SL(r,2)$ or $G\leq \Gamma L(1,q^r)$.
By Lemma  \ref{transitive_action_of_subgroup}, the only proper subgroup of $\Gamma L(1,q^r)$
that acts transitively is $Z$, and we thus have established the lemma.
\end{proof}

\begin{thm} \label{Galois_group_is_SL(r,2)}
Let $F$ be a field of characteristic $2$.
Let $L$ be a monic $2$-polynomial of $2$-degree $r$ in $F[x]$, where $r$ is an odd prime.  Suppose that the coefficient
of $x$ in $L$ is $1$ and
$L(x)/x$ is irreducible over $F$. Then the Galois group of $L$ over $F$ is isomorphic to 
one of the following:
\begin{enumerate}
\item the special linear
group $SL(r,2)$, 
\item  the group $\Gamma L(1,2^r)$ of order $r(2^r-1)$, 
\item  the Singer
cycle $Z$ of order $2^r-1$. 
\end{enumerate}
Each possibility may occur for suitable $F$.
\end{thm}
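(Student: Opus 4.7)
The first assertion follows immediately from earlier results. Applying Theorem~\ref{transitive_subgroup_of_SL} with $q=2$, and noting that $(-1)^r = 1$ in characteristic $2$, the Galois group $G$ embeds as a transitive subgroup of $SL(r,2)$ on $\F_2^r \setminus \{0\}$, whereupon Lemma~\ref{three_subgroups_act_transitively} restricts $G$, up to conjugacy, to one of $SL(r,2)$, $\Gamma L(1,2^r)$, or the Singer cycle $Z$.

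The Singer cycle case can be realized already over the finite field $F = \F_2$. Because the multiplicative order of $2$ modulo $2^r - 1$ is exactly $r$, the cyclotomic polynomial $\Phi_{2^r - 1}$ factors over $\F_2$ into $\phi(2^r - 1)/r$ irreducibles of degree $r$; each such irreducible factor determines a Frobenius-invariant $r$-dimensional $\F_2$-subspace $V \subseteq \F_{2^{2^r-1}}$ on which Frobenius acts irreducibly, and hence as a Singer cycle. Setting $L(x) = \prod_{v \in V}(x - v)$ produces a monic $2$-polynomial of $2$-degree $r$ with coefficients in $\F_2$ (because $V$ is Frobenius-invariant). The nonzero roots form a single Frobenius orbit, so $L(x)/x$ is irreducible over $\F_2$; and the coefficient of $x$ equals $\prod_{v \ne 0} v = \mathrm{Norm}_{\F_{2^{2^r-1}}/\F_2}(v_0) \in \F_2^\times = \{1\}$ for any $v_0 \in V \setminus \{0\}$. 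The Galois group is cyclic of order $2^r - 1$, hence is $Z$.

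For $\Gamma L(1, 2^r)$ and $SL(r, 2)$ an infinite base field is required, since both groups are non-cyclic for $r \geq 2$. I would pass to $F = \F_2(t_1, \ldots, t_k)$ for a suitable $k$ and consider a deformation $L(x) = L_0(x) + \sum_i t_i g_i(x)$ of the Singer-cycle polynomial $L_0 \in \F_2[x]$ constructed above, where each $g_i$ is a nonzero $2$-polynomial of $2$-degree less than $r$ with zero coefficient of $x$ (so that the coefficient of $x$ in $L$ remains $1$). Choosing the $g_i$ so that $L(x)/x$ remains irreducible over $F$, via a coprimality condition on $L_0/x$ and the $g_i/x$ in the spirit of the example following Theorem~\ref{Galois_group_is_SL(r,q)}, yields a Galois group subject to the trichotomy above. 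For sufficiently generic $g_i$ the Galois group must be strictly larger than $Z$, hence $\Gamma L(1, 2^r)$ or $SL(r, 2)$.

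The main obstacle is separating these two remaining cases. A useful observation is that both $\Gamma L(1, 2^r)$ and $Z$ have odd order (since $r$ and $2^r - 1$ are both odd), whereas $SL(r, 2) = GL(r, 2)$ has even order; hence a single involution in the Galois group detects $SL(r, 2)$, while exact order $r(2^r - 1)$ detects $\Gamma L(1, 2^r)$. With generic parameters one expects $G = SL(r, 2)$, so an explicit realization should be producible by adapting the known constructions of $SL(n, q)$ over $\F_q(t)$ cited in the introduction. The more delicate case is to realize $G = \Gamma L(1, 2^r)$ exactly, which requires controlling the degree of the splitting field via careful fibered specialization or an explicit Galois-theoretic construction from a known $\Gamma L(1, 2^r)$-extension.
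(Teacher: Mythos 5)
Your first paragraph matches the paper exactly (Theorem~\ref{transitive_subgroup_of_SL} with $q=2$, where $(-1)^r=1$, followed by Lemma~\ref{three_subgroups_act_transitively}), and your Singer-cycle realization over $\F_2$ is, once repaired, the paper's construction in disguise: a degree-$r$ irreducible factor $h$ of $\Phi_{2^r-1}$ over $\F_2$ is a primitive polynomial, and the subspace you want is $V=\ker h(\phi)$, $\phi$ the Frobenius $z\mapsto z^2$, i.e.\ the root space of the linearized $2$-associate of $h$. But as stated your construction has soft spots: the roots of $h$ lie in $\F_{2^r}$, which is \emph{not} a subfield of $\F_{2^{2^r-1}}$ (since $r\nmid 2^r-1$), so the factor does not ``determine'' a subspace of $\F_{2^{2^r-1}}$ without an argument --- one needs the normal basis theorem (making $\F_{2^{2^r-1}}$ a cyclic $\F_2[\phi]$-module, so that $h\mid x^{2^r-1}-1$ yields a submodule $\cong\F_2[x]/(h)$), or simply the citation the paper uses (Lidl--Niederreiter, Theorem 3.63, applied to the $2$-associate of a primitive polynomial); also ``Frobenius acts irreducibly, hence as a Singer cycle'' is not a valid inference by itself --- what is needed is that $\phi|_V$ has order exactly $2^r-1$, which comes from primitivity of $h$, not from irreducibility of the action.

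The genuine gap is the clause ``each possibility may occur'': you only establish it for $Z$. For $\Gamma L(1,2^r)$ and $SL(r,2)$ you give a plan (generic deformations over $\F_2(t_1,\dots,t_k)$, ``an explicit realization should be producible,'' the $\Gamma L$ case being ``more delicate'') and you explicitly concede that you cannot separate the two cases; but that separation is precisely what must be proved, and genericity plus the trichotomy gives no lower bound on the group beyond transitivity, so it certifies neither case. The paper closes this with two explicit examples over $F=\F_2(t)$: for $\Gamma L(1,2^r)$, take $L(x)=x^{2^r}+tx$, whose nonzero roots are the $(2^r-1)$-th roots of $t$, so the splitting field is $\F_{2^r}\bigl(t^{1/(2^r-1)}\bigr)$, of degree $r(2^r-1)$ over $F$, and the Galois group is $\Gamma L(1,2^r)$; for $SL(r,2)$, take $L(x)=x^{2^r}+x^2+tx$, whose Galois group is $SL(r,2)$ by a result of Abhyankar (\cite{Ab}, Claim 1.2). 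Without constructions of this kind, or some other argument pinning the group down from below, two of the three occurrence claims in the theorem remain unproved in your proposal.
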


\begin{proof}
Let $G$ denote the Galois group of $L$ over $F$. The arguments of the previous sections show that
$G$ is isomorphic to a subgroup of $SL(r,2)$ that acts transitively on $\F_2^r\setminus \{0\}$.
Lemma \ref{three_subgroups_act_transitively} identifies the three possibilities for $G$
up to isomorphism.

We proceed to provide examples of fields $F$ for which the Galois group has the stated
isomorphism type. We first take $F=\mathbb{F}_2$. Let
\[
a(x)=\sum_{i=0}^r a_ix^i
\]
be a primitive polynomial of degree $r$ over $\mathbb{F}_2$
(the minimal polynomial of a generator of the multiplicative group $\mathbb{F}_{2^r}\setminus \{ 0 \}$).

Then the polynomial
\[
L(x)=\sum_{i=0}^r a_ix^{2^i}
\]
is called the linearized $2$-associate of $a(x)$. Since $a(x)$ is irreducible of order $2^r-1$,
Theorem 3.63 of \cite{LN} 
implies that $L(x)/x$ is irreducible over $\mathbb{F}_2$. Thus
the Galois group of the 2-polynomial $L(x)$ is cyclic of order $2^r-1$.

We next take $F=\mathbb{F}_2(t)$, where $t$ is transcendental over $\mathbb{F}_2$, to provide
the other two examples. We first take 
\[
L(x)=x^{2^r}+tx.
\]
Then we may easily prove that $L(x)/x$ is irreducible over $F$ and has Galois group $\Gamma L(1,2^r)$.

Finally we take
\[
L(x)=x^{2^r}+x^2+tx
\]
over $\mathbb{F}_2(t)$. A result of Abhyankar, \cite{Ab}, Claim 1.2, shows that
$L(x)$ has Galois group $SL(r,2)$.
\end{proof}

\section{Projective Polynomials}

There is another Galois group related to the polynomial $L$ described above, which we introduce now.  Let $M(x)=L(x)/x$. 
Then it is easy to see that $M(x)$ is a monic polynomial in $x^{q-1}$, say $M(x)=P(x^{q-1})$. The polynomial $P$ is monic 
of degree $(q^r-1)/(q-1)$ and is called the projective polynomial associated to $L$. Since we are assuming that $M$ is irreducible,
$P$ is also irreducible over $F$. 

\begin{cor} \label{Galois_group_is_PSL(r,q)}
Let $F$ be a field of characteristic $p$ that contains $\F_q$, where $q$ is a power of $p$. 
Let $L$ be a monic $q$-polynomial of $q$-degree $r$ in $F[x]$, where $r$ is an odd prime.  Suppose that $L(x)/x$ is irreducible over $F$ and the coefficient
of $x$ in $L$ is $-1$. Let $P(x)$ be the projective polynomial of degree $(q^r-1)/(q-1)$ associated to $L$.
Then the Galois group of $P$ over $F$ is isomorphic to the projective special linear
group $PSL(r,q)$.
\end{cor}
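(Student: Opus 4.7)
The plan is to deduce the corollary from Theorem \ref{Galois_group_is_SL(r,q)} together with the Galois correspondence applied to the tower $F \subseteq E_P \subseteq E$, where $E$ is the splitting field of $L$ over $F$ and $E_P$ is the splitting field of $P$ over $F$. Throughout I shall assume $q>2$, so that Theorem \ref{Galois_group_is_SL(r,q)} supplies $G := \text{Gal}(E/F) \cong SL(r,q)$; in the $q=2$ case one has $PSL(r,2) = SL(r,2)$, and the same argument then works whenever $\text{Gal}(L)$ is actually $SL(r,2)$.

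First I would describe the roots of $P$ in terms of those of $L$. Writing $V$ for the $r$-dimensional $\F_q$-space of roots of $L$, the identity $M(x) = L(x)/x = P(x^{q-1})$ forces the roots of $P$ in $E$ to be exactly the elements $\alpha^{q-1}$ for $\alpha \in V\setminus\{0\}$. Two such give the same value iff their ratio lies in $\F_q^*$, so the roots of $P$ are in canonical bijection with the one-dimensional $\F_q$-subspaces of $V$, and in particular $E_P = F(\alpha^{q-1} : \alpha \in V\setminus\{0\}) \subseteq E$.

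The heart of the argument is to compute $H := \text{Gal}(E/E_P)$. An element $\sigma \in G$ lies in $H$ iff $\sigma(\alpha)^{q-1} = \alpha^{q-1}$ for every nonzero $\alpha \in V$, equivalently iff $\sigma(\alpha) \in \F_q^* \cdot \alpha$ for all such $\alpha$. A standard linear-algebra observation, namely that an $\F_q$-linear automorphism of $V$ stabilizing every line is necessarily a scalar, then forces $\sigma$ to act on $V$ as multiplication by some fixed $\lambda \in \F_q^*$. Since $G \leq SL(r,q)$, we must have $\lambda^r = \det(\lambda I) = 1$, placing $\sigma$ in the center $Z(SL(r,q))$; conversely every such scalar manifestly fixes all $\alpha^{q-1}$. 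Hence $H = Z(SL(r,q))$, and the Galois correspondence yields
\[
\text{Gal}(E_P/F) \;\cong\; G/H \;\cong\; SL(r,q)/Z(SL(r,q)) \;=\; PSL(r,q).
\]

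The only real subtlety I anticipate is the ``preserves every line implies scalar'' step. This I would handle by picking two $\F_q$-linearly independent $\alpha,\beta \in V$ with $\sigma(\alpha)=a\alpha$ and $\sigma(\beta)=b\beta$, and computing $\sigma(\alpha+\beta) = a\alpha+b\beta$; the hypothesis forces this to equal $c(\alpha+\beta)$ for some $c\in\F_q^*$, which gives $a=b=c$ by the independence of $\alpha$ and $\beta$. Everything else is routine bookkeeping in the Galois correspondence.
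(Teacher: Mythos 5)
Your argument is correct, and it reaches the conclusion by a genuinely different route at the decisive step. The paper also passes through the surjection $\mathrm{Gal}(E/F)\to\mathrm{Gal}(E_P/F)$ with $\mathrm{Gal}(E/F)\cong SL(r,q)$, but it identifies the kernel indirectly, by group theory: it splits into the cases $r\nmid q-1$ (where $SL(r,q)$ is simple and equals $PSL(r,q)$) and $r\mid q-1$ (where it invokes simplicity of $PSL(r,q)$, i.e.\ that the scalar subgroup $Z$ of order $r$ is the unique proper nontrivial normal subgroup, checks that $\omega I$ kills the $(q-1)$-st powers of the roots, and concludes the kernel is exactly $Z$ because it is proper). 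You instead compute $\mathrm{Gal}(E/E_P)$ directly from the Galois correspondence: fixing all $\alpha^{q-1}$ means stabilizing every $\F_q$-line of $V$, which by your elementary two-vector argument forces a scalar, and the determinant condition pins the scalar down to the centre; conversely the centre visibly fixes $E_P$. This buys you a uniform argument with no case distinction and no appeal to the simplicity of $PSL(r,q)$ (the paper cites Huppert for that), and it also makes explicit the small point the paper leaves implicit, namely why the kernel cannot be all of $SL(r,q)$. What the paper's route buys is brevity given the standard structure theory, since it never needs to describe the fixed field of the kernel. Two minor points you should make explicit to be complete: that $E_P\subseteq E$ is Galois over $F$ (it is the splitting field of $P$, which is separable because its $(q^r-1)/(q-1)$ roots $\alpha^{q-1}$ are distinct), so the correspondence gives $\mathrm{Gal}(E_P/F)\cong G/H$; and that $G$ is not merely abstractly isomorphic to $SL(r,q)$ but equals $SL(V)$ inside $GL(V)$ (Theorem \ref{only_SL(r)_acts_transitively}), which is what guarantees that every scalar of determinant one actually lies in $G$, so that $H$ is the full centre.
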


\begin{proof}
Given that $P(x^{q-1})=L(x)/x$, it follows that the Galois group of $P$ over $F$ is a homomorphic image
of that of $L$, since the splitting field of $L$ over $F$ contains that of $P$. Now we have shown that the Galois group of $L$ over $F$ is isomorphic to $SL(r,q)$. We consider
two cases. Suppose that $r$ does not divide $q-1$. Then $SL(r,q)$ is a nonabelian finite simple group and
$PSL(r,q)$ is isomorphic to $SL(r,q)$. The corollary is immediate in this case.

Suppose next that $r$ divides $q-1$. Then $\F_q^*$ contains an element $\omega$, say, of order $r$ and similarly
$SL(r,q)$ contains a central subgroup, $Z$, say, of order $r$ generated by the scalar matrix $\omega I$. The structure
theory of special linear groups implies that $Z$ is the unique proper normal subgroup of $SL(r,q)$. This follows since $PSL(r,q)=SL(r,q)/Z$
is a simple group (see, for example, Hauptsatz 6.13, p.182, of \cite{Hup}).
Since $\omega I$ acts trivially on the $(q-1)$-powers of the roots of $L$, and hence trivially on the roots of $P$, the homomorphism from
the Galois group of $L$ onto the Galois group of $P$ contains $Z$ in its kernel. The kernel is then precisely
$Z$ and hence the image is $PSL(r,q)$.
\end{proof}

\section{Polynomials of q-degree 2}

Our results up to now have been for $q$-polynomials of $q$-degree $r$, where $r$ is an odd prime.
We conclude the paper by considering $q$-polynomials of $q$-degree 2. 
As we mentioned earlier, we can employ a theorem
of Dickson to prove reasonably complete results. We begin the section by proving a lemma
that is useful for making certain constructions of Galois groups. The result itself must
be well known.

\begin{lemma} \label{irreducibility_criterion}
Let $F$ be a field and let $P$ be a monic irreducible separable polynomial in $F[x]$.
Let $E$ be a splitting field for $P$ over $F$ and let $G$ be the Galois group of $E$ over
$F$. Let $H$ be any subgroup of $G$ that acts transitively on the roots of $P$ in $E$
and let $E^H$ be the fixed field of $H$ in $E$. Then $P$ is irreducible over $E^H$ and $H$ is its Galois group.
\end{lemma}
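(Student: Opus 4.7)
The plan is to apply the fundamental theorem of Galois theory to the extension $E/E^H$ and then use the usual correspondence between irreducible factors of a polynomial and orbits of the Galois group on its roots.

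First I would observe that, since $E$ is a splitting field of the separable polynomial $P$ over $F$, the extension $E/F$ is Galois with group $G$. By the fundamental theorem of Galois theory applied to the subgroup $H \le G$, the extension $E/E^H$ is Galois with Galois group exactly $H$. Moreover, because $E = F(\alpha_1,\dots,\alpha_d)$ where $\alpha_1,\dots,\alpha_d$ are the roots of $P$, and since $F \subseteq E^H$, we also have $E = E^H(\alpha_1,\dots,\alpha_d)$. Thus $E$ is a splitting field of $P$ over $E^H$, and $P$ remains separable over $E^H$.

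Next I would factor $P$ into monic irreducibles over $E^H$, say $P = Q_1 \cdots Q_s$. A standard fact from Galois theory says that the roots of each $Q_j$ (which all lie in $E$) form a single orbit under the action of $\mathrm{Gal}(E/E^H) = H$ on the roots of $P$, and conversely each $H$-orbit on the roots of $P$ is the root set of some $Q_j$. By hypothesis, however, $H$ acts transitively on the full set of roots of $P$, so there is only one orbit and hence $s = 1$. Therefore $P = Q_1$ is irreducible over $E^H$.

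Finally, since $E$ is a splitting field of the irreducible separable polynomial $P$ over $E^H$, the Galois group of $P$ over $E^H$ is by definition $\mathrm{Gal}(E/E^H) = H$, completing the proof. The only step that requires any care is the orbit--factor correspondence in the second paragraph; it relies on separability of $P$ over $E^H$ (which we have) and the transitivity of the Galois group on the roots of any irreducible factor, both of which are standard.
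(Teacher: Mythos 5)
Your proposal is correct and follows essentially the same route as the paper: identify $\mathrm{Gal}(E/E^H)=H$ (the paper cites Artin's theorem, you cite the fundamental theorem of Galois theory) and then use the fact that $H$ permutes the roots of any irreducible factor of $P$ over $E^H$, so transitivity forces a single factor. The only cosmetic difference is that you invoke the full orbit--irreducible-factor correspondence, whereas the paper argues directly with one factor $Q$ and concludes $P=Q$; the substance is the same.
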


\begin{proof}
It follows from Artin's theorem, \cite{L}, Theorem 1.8, p.264, that $E$ is a Galois extension
of $E^H$ with Galois group $H$. 

Now let $Q$ be a monic irreducible factor of $P$ in $E^H[x]$ and let $\alpha$ be a root of
$Q$ in $E$ (the roots of $Q$ lie in $E$ since $Q$ divides $P$). Then since $H$ fixes all elements
of $E^H$, $\sigma(\alpha)$ is also a root of $Q$ for any $\sigma$ in $H$. 
Thus, since $H$ acts transitively on the roots of $P$, $P$ and $Q$ have the same set of roots.
But as they are both monic polynomials, $P=Q$, and we have thus shown that $P$
is irreducible over $E^H$.
\end{proof}

We also use the following theorem in connection with the above-mentioned constructions.

\begin{thm} \label{Serre's_SL_2_theorem}
Let $q$ be a power of a prime and let $F=\mathbb{F}_q(t)$, where $t$ is transcendental
over $\mathbb{F}_q$. Then the polynomial $x^{q^2-1}+tx^{q-1}+1$ in $F[x]$ has Galois group
isomorphic to $SL(2,q)$. 
\end{thm}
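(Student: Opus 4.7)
The plan is to identify $x^{q^2-1}+tx^{q-1}+1$ as $L(x)/x$ for a suitable monic $q$-polynomial $L$ of $q$-degree $2$ over $F = \F_q(t)$, apply the framework of Sections 2--3 to place the Galois group inside $SL(2,q)$ with a transitive action on $\F_q^2\setminus\{0\}$, and then use Dickson's classification of subgroups of $SL(2,q)$ in place of the Hering-based Theorem \ref{result_from_FM} (which is only available for odd prime degree).

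I would take $L(x) := x^{q^2} + tx^q + x$, a monic $q$-polynomial of $q$-degree $2$ whose coefficient of $x$ is $1 = (-1)^2$. The splitting fields of $L$ and of $L(x)/x = x^{q^2-1}+tx^{q-1}+1$ over $F$ coincide, so it suffices to compute the Galois group of $L$. Irreducibility of $L(x)/x$ over $F$ is immediate by Gauss's lemma: in $\F_q[x][t]$ the polynomial is linear in $t$ with coefficients $x^{q-1}$ and $x^{q^2-1}+1$, and these are coprime in $\F_q[x]$ since the second has nonzero constant term.

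With irreducibility in hand, Corollary \ref{determinants_are_1} puts the Galois group $G$ of $L$ inside $SL(2,q)$, and Lemma \ref{transitive_action} shows $G$ acts transitively on the $q^2-1$ nonzero vectors of the root space. The task reduces to proving that the only transitive subgroup of $SL(2,q)$ on $\F_q^2\setminus\{0\}$ that arises in this setup is $SL(2,q)$ itself. A transitive subgroup $H$ has $|H|$ divisible by $q^2-1 = (q-1)(q+1)$. Running through Dickson's classification---Borel-type subgroups of order $q(q-1)$, cyclic and dihedral subgroups of order dividing $2(q-1)$ or $2(q+1)$ (including the Singer-normalizer intersection $\Gamma L_1(1,q^2)$ of order $2(q+1)$), the central extensions of $A_4$, $S_4$, $A_5$, and the subfield subgroups $SL(2,q_0)$---an order count eliminates most candidates outright for large $q$.

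The main obstacle is the handful of small-$q$ cases where the order count leaves proper transitive subgroups as logical possibilities: for instance $\Gamma L_1(1,9)$ for $q=3$ (of order $8=q^2-1$) and the embedding $SL(2,3)\hookrightarrow SL(2,5)$ that acts regularly on $\F_5^2\setminus\{0\}$. To rule these out I would exploit the specific polynomial at hand: either by analyzing the ramification of $F(\alpha)/F$ (where $\alpha$ is a root of $L$) at the places $t=0$ and $t=\infty$ to produce group elements of prescribed order, or by specializing $t$ to particular values in $\overline{\F}_q$ and computing the Galois group of the specialized polynomial. In each exceptional case one should be able to exhibit an element of $G$ not contained in the candidate proper subgroup, thereby forcing $G=SL(2,q)$.
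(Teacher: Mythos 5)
Your reduction is fine as far as it goes: the Gauss-lemma irreducibility of $x^{q^2-1}+tx^{q-1}+1$ over $\F_q(t)$ is correct (it is the same construction the paper records after Theorem \ref{Galois_group_is_SL(r,q)}), and Corollary \ref{determinants_are_1} together with Lemma \ref{transitive_action} does place $G$ inside $SL(2,q)$ acting transitively on $\F_q^2\setminus\{0\}$. The genuine gap is exactly where the theorem is hard: the values $q=2,3,5,7,11$. For these $q$, $SL(2,q)$ really does contain proper subgroups of index $q$ that act transitively (cyclic of order $3$, quaternion of order $8$, $SL(2,3)$, the binary octahedral group, $SL(2,5)$ respectively), and the paper's Theorem \ref{Galois_group_is_SL(2,q)} shows that each of these actually occurs as the Galois group of a polynomial of this very shape over a suitable base field (a degree-$q$ extension of $\F_q(t)$). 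So no combination of irreducibility, transitivity and Dickson's subgroup list can finish the argument; one must use arithmetic specific to the field $\F_q(t)$ and this particular polynomial. Your proposal acknowledges this but only gestures at "ramification at $t=0,\infty$" or "specialization" without producing a single element of $G$ outside the candidate subgroups. A workable version would be, for example, to exhibit wild ramification so that $p$ divides $|G|$ (each exceptional candidate has order $q^2-1$ times a factor prime to $p$, hence order coprime to $p$), but that computation is not carried out, so as written your argument proves the theorem only for $q\notin\{2,3,5,7,11\}$.

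The paper takes a different route that handles all $q$ uniformly and needs no irreducibility or transitivity at all: it cites Serre's theorem (appendix to \cite{Ab}) that $f(x)=x^{q+1}+tx+1$ has Galois group $PSL(2,q)$ over $\F_q(t)$. Since $x^{q^2-1}+tx^{q-1}+1=f(x^{q-1})=L(x)/x$ with $L(x)=x^{q^2}+tx^q+x$, Corollary \ref{determinants_are_1} gives $G\le SL(2,q)$, and the surjection of $G$ onto the Galois group of $f$ forces $|G|\ge|PSL(2,q)|$; for odd $q$ the remaining possibility $G\cong PSL(2,q)$ inside $SL(2,q)$ is eliminated because $SL(2,q)$ has the unique involution $-I$ while $PSL(2,q)$ has more than one. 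If you wish to avoid quoting Serre, you must supply the missing ramification or specialization argument for the five exceptional values of $q$; otherwise the proof is incomplete precisely in those cases.
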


\begin{proof}
A theorem of Serre, found in an appendix to  \cite{Ab}, p.131, shows that the Galois
group of $f(x)=x^{q+1}+tx+1$ over $F$ is isomorphic to the group $PSL(2,q)$.
Recall that $PSL(2,q)=SL(2,q)$ if $q$ is even, and $PSL(2,q)$ is the quotient of $SL(2,q)$ by its central
subgroup of order 2 if $q$ is odd.

The polynomial $L(x)=xf(x^{q-1})$ is a $q$-polynomial of $q$-degree 2 and thus its Galois
group $G$, say, is isomorphic to a subgroup of $SL(2,q)$ by Corollary \ref{determinants_are_1}. Now since
$f(x^{q-1})=L(x)/x$, the Galois group of $f(x)$ over $F$ is a homomorphic image of $G$, for
a splitting field of $L(x)/x$ over $F$ contains a splitting field for $f(x)$ over $F$.

We can rapidly dispose of the case when $q$ is even. For when $q$ is even, $SL(2,q)=PSL(2,q)$
and the homomorphism introduced above must be an isomorphism. Thus $G$ is isomorphic to
$SL(2,q)$.

We turn to the case when $q$ is odd, which requires a little more analysis. Since $|SL(2,q)|=q(q^2-1)$ and $|PSL(2,q)|=q(q^2-1)/2$, we have $|G|=|SL(2,q)|$ or $|G|=|SL(2,q)|/2$. Clearly,
the first possibility implies that $G$ is isomorphic to $SL(2,q)$. We must now show that
the second case cannot happen. 

For, suppose that $|G|=|SL(2,q)|/2=|PSL(2,q)|$. Then $G$ is isomorphic to $PSL(2,q)$. But $G$
is isomorphic to a subgroup of $SL(2,q)$, and $SL(2,q)$ contains a unique involution, namely
$-I$. Since $|G|$ is even, $G$ must also contain a unique involution. This is a contradiction,
since it is easy to see that $PSL(2,q)$ contains more than one involution. This excludes
the second possibility.
\end{proof} 

We turn now to consideration of the Galois groups attached to certain $q$-polynomials of
$q$-degree 2. The main result is complicated by the occurrence of a number of exceptional cases
which we feel are of special interest.

\begin{thm} \label{Galois_group_is_SL(2,q)}
Suppose that the field $F$ contains the field $\F_q$. Let $L(x)=x^{q^2}+bx^q+x$ in $F[x]$ is and suppose that $L(x)/x$ is irreducible over $F$. Then the Galois group $G$ of $L$
over $F$ is isomorphic to $SL(2,q)$ when $q$ is not equal to any of $2$, $3$, $5$, $7$, $11$.

In the exceptional cases, the following hold.

\begin{enumerate}

\item  When $q=2$, $G$ is isomorphic to a cyclic group of order $3$ or to
$SL(2,2)$.

\item  When $q=3$, $G$ is isomorphic to a quaternion group of order $8$ or to
$SL(2,3)$. 

\item  When $q=5$, $G$ is isomorphic to 
$SL(2,3)$ or to $SL(2,5)$.

\item  When $q=7$, $G$ is isomorphic to the binary octahedral group of order $48$ (a double
cover of the symmetric group of degree $4$ with generalized quaternion Sylow $2$-subgroup)
or to $SL(2,7)$.

\item  When $q=11$, $G$ is isomorphic to 
$SL(2,5)$ or to $SL(2,11)$.
\end{enumerate}
Each of the five exceptional cases can occur for a suitably chosen field $F$, dependent
on the group. Specifically, $F$ can be taken to be an extension field of degree $q$
of the function field $\mathbb{F}_q(t)$.
\end{thm}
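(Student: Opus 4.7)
The plan is to combine Corollary \ref{determinants_are_1} with Dickson's classification of the subgroups of $SL(2,q)$. Since $L$ is monic of $q$-degree $2$ and the coefficient of $x$ equals $1=(-1)^2$, Corollary \ref{determinants_are_1} forces $G\le SL(2,q)$, while Lemma \ref{transitive_action} makes $G$ act transitively on the $q^2-1$ nonzero vectors of $\F_q^2$. In particular $(q^2-1)\mid |G|$ and $G$ stabilizes no line (a $G$-invariant line has only $q-1<q^2-1$ nonzero vectors, incompatible with transitivity). My first task is to read off from Dickson's list the subgroups of $SL(2,q)$ satisfying these two constraints, then argue that for all but the five listed values of $q$ the only possibility is $SL(2,q)$ itself.

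Dickson's theorem says the irreducible subgroups of $SL(2,q)$ (those not fixing a line) are, up to conjugacy, contained in the normalizer of a Singer cycle (dicyclic, of order $2(q+1)$), in a subfield subgroup $SL(2,q')$, or are one of the binary polyhedral groups $2.A_4$, $2.S_4$, $2.A_5$ of orders $24$, $48$, $120$. Imposing $|H|\ge q^2-1$ together with $(q^2-1)\mid |H|$: for the Singer normalizer one needs $(q-1)\mid 2$, so $q\in\{2,3\}$; for $2.A_4$ one needs $(q^2-1)\mid 24$, so $q\in\{2,3,5\}$; for $2.S_4$ one needs $(q^2-1)\mid 48$, so $q\in\{2,3,5,7\}$; for $2.A_5$ one needs $(q^2-1)\mid 120$, so $q\in\{2,3,5,11\}$; for a proper subfield subgroup $(q^2-1)\mid q'(q'^2-1)$ narrows to $(q,q')\in\{(5,3),(11,5)\}$. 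Using the exceptional isomorphisms $SL(2,3)\cong 2.A_4$ and $SL(2,5)\cong 2.A_5$, discarding embeddings that do not exist (for instance $2.S_4\not\hookrightarrow SL(2,5)$ since $S_4\not\le A_5$), and noting that in each surviving exceptional case $|H|=q^2-1$ so that transitivity forces regularity (easily verified from the element orders), one arrives at precisely the five lists in the statement.

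For the realization half, I would start from Theorem \ref{Serre's_SL_2_theorem} applied to $L_0(x)=x^{q^2}+tx^q+x$ over $\F_q(t)$, whose Galois group is $SL(2,q)$; let $E$ be its splitting field. For each exceptional $H\le SL(2,q)$ on the list, $H$ acts transitively on the roots of $L_0(x)/x$ in $E$, so Lemma \ref{irreducibility_criterion} guarantees that $L_0(x)/x$ remains irreducible over $F:=E^H$ with Galois group $H$. A direct index computation ($|SL(2,2)|/3=2$, $|SL(2,3)|/|Q_8|=3$, $|SL(2,5)|/|SL(2,3)|=5$, $|SL(2,7)|/|2.S_4|=7$, $|SL(2,11)|/|SL(2,5)|=11$) gives $[F:\F_q(t)]=q$ in every case, matching the asserted degree.

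The main obstacle is not any single deep step but the case-by-case bookkeeping: the same abstract group surfaces under several guises ($Q_8$ as the Singer normalizer at $q=3$; $SL(2,3)\cong 2.A_4$ acting both as the full group at $q=3$ and as the exceptional subgroup at $q=5$; $SL(2,5)\cong 2.A_5$ as the exceptional subgroup at $q=11$; $2.S_4$, the binary octahedral group, at $q=7$). One must verify each embedding, confirm that the candidate really is transitive rather than merely of divisible order, and exclude spurious reducible subgroups. Once the classification is pinned down, the application of Lemma \ref{irreducibility_criterion} to Serre's polynomial is routine.
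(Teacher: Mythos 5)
Your argument is correct and shares the paper's skeleton at both ends: the reduction (Corollary \ref{determinants_are_1} plus Lemma \ref{transitive_action} giving a transitive subgroup of $SL(2,q)$ of order divisible by $q^2-1$) and the realization (Serre's polynomial from Theorem \ref{Serre's_SL_2_theorem}, then Lemma \ref{irreducibility_criterion} applied to the fixed field of an index-$q$ transitive subgroup) are exactly what the paper does. Where you genuinely diverge is the middle, group-theoretic step. The paper passes to $PSL(2,q)$, notes that $\pi(G)$ has index dividing $q$, and quotes only Dickson's minimal-index theorem (no subgroup of index less than $q+1$ outside the listed $q$); it then identifies the index-$q$ subgroups for $q=3,5,7,11$ by hand, via Sylow arguments, the normalizer of a Sylow $2$-subgroup of $SL(2,5)$, and Schur's theory of central extensions to pin down the binary octahedral group at $q=7$ and $SL(2,5)$ at $q=11$. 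You instead invoke Dickson's full classification of the irreducible subgroups of $SL(2,q)$ and run a divisibility/embedding bookkeeping; this buys a more uniform treatment and makes the exceptional groups appear by name rather than by reconstruction, at the cost of needing the heavier classification statement. Two small imprecisions in your quotation of that classification are worth fixing, though neither damages the proof: the irreducible subgroups also include the imprimitive subgroups in the normalizer of the split torus (order dividing $2(q-1)$) and, for $q$ a proper power $q'^m$, the preimages of $PGL(2,q')$ --- both are killed instantly by $(q^2-1)\mid |H|$, but they should be listed and dismissed; and your pairs $(q,q')=(5,3),(11,5)$ are not subfield subgroups (since $\F_3\not\subset\F_5$ and $\F_5\not\subset\F_{11}$) but are precisely the exceptional copies $2.A_4\cong SL(2,3)$ and $2.A_5\cong SL(2,5)$ already on your binary polyhedral list, so that branch is redundant rather than new. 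Your explicit check that the exceptional candidates act freely, hence regularly and transitively (needed for the realization half), is a point the paper passes over lightly, so that is a welcome addition.
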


\begin{proof}
 Corollary \ref{determinants_are_1} shows that $G$ is isomorphic to
a subgroup of $SL(2,q)$. Furthermore, as we are assuming that $L(x)/x$
is irreducible, it follows that $q^2-1$ divides $|G|$. 

We divide the rest of the proof into two cases. Suppose that $q$ is a power of $2$. Then $SL(2,q)$ is identical
with $PSL(2,q)$ and $G$ is thus isomorphic to a subgroup of $PSL(2,q)$ of order divisible by $q^2-1$. Thus since $PSL(2,q)$ has order
$q(q^2-1)$, $G$ has index a divisor of $q$. Theorem 262, p.286, of 
Dickson \cite{D} shows that $PSL(2,q)$ has no subgroup
of index less than $q+1$ provided $q$ is different from 2. However, when $q=2$, $PSL(2,2)$
has a subgroup of order 3 that has index 2 in the group, and this accounts for one exception.

Suppose next that $q$ is odd. It will be convenient to shorten the presentation by considering $G$
to be a subgroup of $SL(2,q)$. 
Then as we noted in the previous proof,
$SL(2,q)$ contains a unique element of order 2, namely $-I$, and since $|G|$ is even,
$G$ contains $-I$. Let $\pi$ be the natural homomorphism from $SL(2,q)$ onto
$PSL(2,q)$.  We have $|\pi(G)|=|G|/2$ since $G$ contains $-I$ and
$-I$ is in the kernel of $\pi$.  

A straightforward
argument shows that the index of $\pi(G)$ in $PSL(2,q)$ divides $q$, and we again appeal to Dickson's theorem, \cite{D},
to show that no such subgroup exists when $q$ is different from $3$, $5$, $7$ or $11$ 
(but such subgroups do exist in those four exceptional cases).

We finish by considering the four odd exceptional (prime) values of $q$ where $\pi(G)$
has index $q$ in $PSL(2,q)$. We start with $q=3$. $PSL(2,3)$ has order 12 and hence $\pi(G)$
has order 4. It follows that $G$ has order 8 and is a Sylow 2-subgroup of $SL(2,3)$.
$G$ is therefore isomorphic to a quaternion group of order 8.

Next, we examine what happens when $q=5$. The group $PSL(2,5)$ is isomorphic
to the alternating group $A_5$ and any subgroup of index 5 in $A_5$ is isomorphic to $A_4$.
Thus $\pi(G)$ is isomorphic to $A_4$ and $G$ has order 24. Now $A_4$ has a normal Sylow 2-subgroup of order 4 and hence $G$ has a normal Sylow 2-subgroup of order 8, which is also
a Sylow 2-subgroup of $SL(2,5)$. Thus $G$ is contained in the normalizer of a Sylow 2-subgroup
of $SL(2,5)$. Since the normalizer of a Sylow 2-subgroup of $SL(2,5)$ is known to be $SL(2,3)$,
we see that $G$ is isomorphic to $SL(2,3)$.

Suppose next that $q=7$. By Dickson's theorem, a subgroup of index 7 in $PSL(2,7)$
is isomorphic to the symmetric group $S_4$. The group $G$ therefore has order 48 and it contains a Sylow
2-subgroup of order 16 of $SL(2,7)$, which is generalized quaternion of order 16. Let $Z$ denote
the central subgroup of $G$ generated by $-I$. The subgroup $Z$ is contained in the commutator subgroup
$G'$ of $G$, since it is already contained in the commutator subgroup of the Sylow 2-subgroup.
Thus $Z$ is contained in the intersection of $G'$ with the centre of $G$. This implies
that $G$ is a proper central extension of $S_4$. An argument of Schur, \cite{Sch}, second
paragraph of p.165, shows that there are at most
two isomorphism classes for $G$. The general linear group $GL(2,3)$ is 
one such proper central extension but the group $G$ has a generalized quaternion Sylow 2-subgroup
and is not isomorphic to $GL(2,3)$. It is thus a different central extension of $S_4$
known as the binary octahedral group. We have thus determined $G$ up to isomorphism.
We note
that Schur calls these proper central extensions {\it Darstellungsgruppen}.

The last remaining case is that of $q=11$. Dickson's theorem shows that a subgroup
of index 11 in $PSL(2,11)$ is isomorphic to $PSL(2,5)$. If we invoke the arguments
of the previous paragraph regarding covering groups and use a theorem of Schur, \cite{Hup}, Satz 25.7, p.646, we find that $G$ is isomorphic to $SL(2,5)$.

We close by considering constructions of the exceptional groups as Galois groups.
We take $F=\mathbb{F}_q(t)$ and use the polynomial $x^{q^2-1}+tx^{q-1}+1$ over $F$. We have shown
in Theorem \ref{Serre's_SL_2_theorem} that the Galois group of this polynomial over $F$ is isomorphic to $SL(2,q)$. For the five exceptional values of $q$, $SL(2,q)$ contains a subgroup of index $q$ that acts transitively
on the underlying two-dimensional vector space over $\mathbb{F}_q$, and this subgroup
is isomorphic to the appropriate group listed above, for the same reasons as we provided
in the argument of the last few paragraphs. We now appeal to Lemma \ref{irreducibility_criterion}
to deduce that $F$ has an extension field $E$, say, with $|E:F|=q$, such that 
$x^{q^2-1}+tx^{q-1}+1$ is irreducible over $E$ and has Galois group isomorphic
to the relevant exceptional group associated to $q$.
\end{proof}

We remark that  three of the exceptional groups we have described above, those of order 24, 48 and 120,
are well known in group theory and geometry. They are called binary polyhedral groups, since each
is the double cover of the symmetry group of a regular polyhedron  (Platonic solid). They occur
as subgroups of the group of unit real quaternions and were studied by Hamilton.

\bigskip

{\bf Acknowledgement.}
We thank the anonymous referee for careful reading of our submission and helpful suggestions.

\bigskip

\end{document}